\newtheorem{thm}{Theorem}[section]
\newtheorem{lem}[thm]{Lemma}
\newtheorem{prop}[thm]{Proposition}
\theoremstyle{definition}
\newtheorem{nota}[thm]{Notation}
\theoremstyle{remark}
\newtheorem{rem}[thm]{Remark}
\newtheorem{ex}[thm]{Example}
\numberwithin{equation}{section}
\begin{document}
 \author[Ahmed Jafar ,  Omar Ajebbar and Elhoucien Elqorachi ]{Ahmed Jafar $^{1}$, Omar Ajebbar $^{2}$ and Elhoucien Elqorachi $^{1}$}
	
	\address{%
		$^1$
	 Ibn Zohr University, Faculty of sciences,
		Department of mathematics,
		Agadir,
		Morocco}
	\address{%
	$^2$
	Sultan Moulay Slimane University, Multidisciplinary Faculty,
	Department of mathematics and computer science,
	Beni Mellal,
	Morocco}
\email{hamadabenali2@gmail.com, omar-ajb@hotmail.com, elqorachi@hotmail.com }
	 \thanks{2020 Mathematics Subject Classification: Primary 39B52, Secondary 39B32\\ Key words and phrases: , Semigroup, Kannappan functional equation, Sine subtraction law, Involutive, Automorphism.}
	 	
	 	\title[ A Kannappan-sine subtraction law on semigroups ]{A Kannappan-sine subtraction law on semigroups}
	 	
	 	\begin{abstract}
		  Let  $S$ be a semigroup, $z_0$ a
		  fixed element in   $S$ and  $\sigma:S \longrightarrow S$ an involutive automorphism.
		We determine the  complex-valued solutions of Kannappan-sine subtraction law $f(x\sigma(y)z_0)=f(x)g(y)-f(y)g(x),\; x,y \in S
		$. As an application we solve  the following  variant of  Kannappan-sine subtraction law  viz. $f(x\sigma(y)z_0)=f(x)g(y)-f(y)g(x)+\lambda g(x\sigma(y)z_0) ,\; x,y \in S,$ where   $\lambda \in \mathbb{C}^{*}
			$. The continuous solutions on topological semigroups are given and an example to illustrate the main results is also given.
	 \end{abstract}
	\maketitle\emph{}
	\section{Introduction}
Throughout this paper  $S$ is a semigrpoup, $z_0$ is a fixed element in $S$ and  $\sigma:S\longrightarrow S$ is an  involutive automorphism. That $\sigma$ is involutive means that $\sigma\circ\sigma(x)=x$.

The subtraction law for the sine
$$\sin(x-y)=\sin(x)\cos(y)-\sin(y)\cos(x),\;\; x,y \in \mathbb{R},$$
gives rise to functional equations on any semigroup  $S$ as follows
\begin{equation}
\label{7}
f(x\sigma(y))=f(x)g(y)-f(y)g(x)
, \;x,y \in S,
\end{equation}
for unknown functions $f,g:S \longrightarrow \mathbb{C}$. The functional equation (\ref{7}) has been solved on groups, see for example \cite{ebb}, \cite[Theorem 4.12]{g}, monoids generated by their  squares \cite[Proposition 3.6]{eb}, semigroups generated by their squares \cite[Theorem 5.2]{ajj} and  general monoids \cite[Theorem 4.2]{c}. The most current results about (\ref{7})  on general semigroups are \cite[Theorem 4.2]{vb} and
\cite[Proposition 3.2]{q}.

For more additional discussions about this functional equation and its history see \cite[Ch.13]{a}, \cite[Theorem II.2]{ebb} and their references. Thus the subtraction law for the sine  (\ref{7}) is solved in large generality,  and it makes sense to solve other trigonometric functional equations on semigroups by expessing their solutions in terms of solutions of (\ref{7}). This is what we shall do for Kannappan-Sine subtraction law (\ref{6}) (defined below).\\\\In  \cite[Proposition 16]{i} he solved the functional equation
\begin{equation}\label{4}
f(xyz_{0})=f(x)f(y),\,x,y \in S,
\end{equation}
on semigroups, and where $z_{0}$ is a fixed element in $ S$. The solutions of (\ref{4}) are of the form $f=\chi(z_0)\chi,$ where $\chi:$ $S\to \mathbb{C}$ is a multiplicative function.
\\In the present paper we consider the following Kannappan-Sine subtraction law
\begin{equation}
\label{6}
f(x\sigma(y)z_0)=f(x)g(y)-f(y)g(x), \;x,y \in S.
\end{equation} If $S$ is a monoid, then by replacing $y$ by the identity element in (\ref{6}), equation (\ref{6}) can be written as follows: $$ \alpha f(x\sigma(y))=f(x)g(y)-f(y)g(x),\,x,y\in S$$ or $$f(x\sigma(y))=f(x)g(y)-f(y)g(x)+\beta g(x\sigma(y)),\,x,y\in S.$$
Furthermore, the second equation is also closely related to the sine subtraction law
\begin{equation}\label{ast}
F(x\sigma(y))=F(x)g(y)-F(y)g(x), \;x,y \in G,
\end{equation} where $F=\dfrac{1}{\alpha}f -g$,  and then explicit formulas
for $f$ and $g$ on groups exist in the literature (see for example  \cite[Corollary 4.4]{l}).\\\\The purpose of the present paper is to show  how the relation between (\ref{6}) and equation (\ref{ast}) on monoids extends to the much wider framework of (\ref{6}) on semigroups. The above computations used on monoids  does not apply, and must provide a separate exposition. We show that any solutions of (\ref{6}) are closely related to the solutions of (\ref{ast}), and so can be expressed in terms of multiplicative and additive functions of a semigroup  $S.$\\\\Kannappan functional equations are studied in several works. We refer for example to \cite{boui}, \cite{kel}, \cite{f}, \cite{ff}, \cite{s} and \cite{pe}.\\\\Our main contributions to the knowledge about  Kannappan-Sine subtraction law   (\ref{6}) are the following\\
- We extend the setting from groups to semigroups with involution.\\- We relate the solutions of equation (\ref{6}) to those of equation (\ref{ast}), and we derive formulas for solutions of (\ref{6}) (Section 5, Theorem 5.1)\\-As an application,   we determine the solutions   of the following variant of  Kannappan-sine subtraction law (\ref{6}), namely
  \begin{equation}
  \label{2}
  f(x\sigma(y)z_0)=f(x)g(y)-f(y)g(x)+\lambda g(x\sigma(y
  )z_0), \;x,y \in S ,
  \end{equation}
 where $\lambda \in\mathbb{C}^{*}$ (Section 6, Theorem 6.1), and we apply the theory to an example (Section 7).
 	\section{Set up, notation and terminology}
		Throughout this paper  we enforce the set up below.\\
	A   semigroup 	$S$ is   a set with an associative composition rule.	If $X$ is a topological space we denote by $C(X)$ the algebra of continuous functions from $X$ to the field of complex numbers  $\mathbb{C}$.
		
		A map $A:S\longrightarrow \mathbb{C}$ is said to be additive if	$
			A(xy)=A(x)+A(y)$   for   all $  x,y \in S$,
		and a function $\chi :S\longrightarrow \mathbb{C}$ is multiplicative if
		$
			\chi(xy)=\chi(x)\chi(y)$ for  all $ x,y \in S.$
	 	If $\chi  $ is multiplicative and  $\chi \neq 0$
		then  we call  $ \chi$ an \textit{exponential}. For a multiplicative function  $\chi :S\longrightarrow \mathbb{C}$,  we define the \textit{null space} $I_{\chi}$ by
		\begin{center}
			$I_{\chi} := \{x\in S\hspace{0.1cm} | \hspace{0.1cm} \chi(x) = 0\}$.
		\end{center}
		Then $I_{\chi}$ is  either empty or a proper subset of $S$ and $I_{\chi}$ is a
		two sided ideal in S if not empty and $S\setminus I_{\chi}$ is a subsemigroup of $S$.
		For any  subset $T \subseteq S$,  let  $T^2 :=\{xy  \hspace{0.1cm}| \hspace{0.1cm}x,y   \in T\}$,  and
		$T^2z_0:=\{xyz_0 \hspace{0.1cm}|\hspace{0.1cm} x,y\in T  \}$.
		
	In order to characterize   some solutions of our functional equations, we use the Ebanks's partition \cite{c,d} of the null space $I_{\chi}$ into the disjoint union $I_{\chi} =  P_{\chi}\cup (I_{\chi} \setminus P_{\chi} ) $  with
		\begin{equation*}
		P_{\chi}:=\{p\in I_{\chi}\setminus I_{\chi}^2 \hspace{0.1cm} | \hspace{0.1cm}up,pv,upv \in  I_{\chi}\setminus I_{\chi}^2 \hspace{0.1cm}\text{for}\hspace{0.1cm} \text{all} \hspace{0.1cm} u,v \in S\setminus I_{\chi}\}.
		\end{equation*}
	 	  Lemma \ref{eb} follows directly from the definition of  $P_{\chi}$.
	 	\begin{lem}
	 	 	\label{eb}  $p\in P_{\chi} \implies up,pv, upv \in  P_{\chi}$ for all  $u,v \in
	 	 	S\setminus I_{\chi}\ $.
	 	 \end{lem}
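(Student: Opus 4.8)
The plan is to check the three claimed memberships directly against the definition of $P_\chi$, using a single structural input: since $\chi$ is multiplicative and nonvanishing off $I_\chi$, the complement $S\setminus I_\chi$ is a subsemigroup of $S$, so any product of finitely many elements of $S\setminus I_\chi$ again lies in $S\setminus I_\chi$. This is exactly what lets every product below be re-bracketed into the shape required by the definition of $P_\chi$.

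First I would treat $up$, where $p\in P_\chi$ and $u\in S\setminus I_\chi$. The defining condition of $P_\chi$ already gives $up\in I_\chi\setminus I_\chi^2$, so it remains to verify that $a(up),\ (up)b,\ a(up)b\in I_\chi\setminus I_\chi^2$ for all $a,b\in S\setminus I_\chi$. By associativity these equal $(au)p$, $upb$ and $(au)pb$ respectively, and $au\in S\setminus I_\chi$; hence each is of the form ``(element of $S\setminus I_\chi$)$\,p$'' or ``(element of $S\setminus I_\chi$)$\,p\,$(element of $S\setminus I_\chi$)'', and therefore lies in $I_\chi\setminus I_\chi^2$ because $p\in P_\chi$. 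This yields $up\in P_\chi$.

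The arguments for $pv$ and $upv$ are identical in spirit. For $v\in S\setminus I_\chi$ one rewrites $a(pv)=apv$, $(pv)b=p(vb)$, $a(pv)b=ap(vb)$, noting $vb\in S\setminus I_\chi$; for $upv$ one rewrites $a(upv)=(au)pv$, $(upv)b=up(vb)$, $a(upv)b=(au)p(vb)$, noting $au,vb\in S\setminus I_\chi$. In every case the outcome is one of the products that the definition of $P_\chi$ forces into $I_\chi\setminus I_\chi^2$, while the plain memberships $pv,upv\in I_\chi\setminus I_\chi^2$ are part of that same definition.

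I do not expect a genuine obstacle here: the statement is a formal consequence of the definition of $P_\chi$ together with the closure of $S\setminus I_\chi$ under multiplication. The only point deserving care is bookkeeping — making sure each re-bracketed product is literally of one of the three shapes $up$, $pv$, $upv$ (with the $u,v$-factors outside $I_\chi$) that appear in the definition, so that the hypothesis $p\in P_\chi$ can be applied verbatim.
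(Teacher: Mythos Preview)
Your proof is correct and is exactly what the paper has in mind: the paper merely states that the lemma ``follows directly from the definition of $P_{\chi}$'' without writing out any details, and your argument spells out precisely this direct verification using the closure of $S\setminus I_{\chi}$ under multiplication.
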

	 	   Throughout this paper we will use Lemma \ref{sigma} without mentioning explicit.
		 \begin{lem}
		 	\label{sigma}
		 	Let $S$ be a semigroup, $\sigma  :S\longmapsto S$ is an involutive automorphism and $\chi :S\longmapsto\mathbb{C}$ be a multiplicative function such that $\chi\circ\sigma =\chi$. Then \\
		 	(a) If $x\in   I_{\chi}$, then $\sigma(x)\in   I_{\chi}$.\\
		 	(b) If $x\in I_{\chi}^2$, then $\sigma(x)\in I_{\chi}^2$. \\
		 	(c) If $x\in 	S\setminus 	I_{\chi}$, then $\sigma(x)\in	S \setminus 	I_{\chi}$.\\
		 	(d) If $x\in 	P_{\chi}$, then $\sigma(x)\in 	P_{\chi}$.\\
		 	(e) If $x\in 	I_{\chi}\setminus 	P_{\chi}$, then $\sigma(x)\in 	I_{\chi}\setminus 	P_{\chi}$.
		 \end{lem}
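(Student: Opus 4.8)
The plan is to establish the five assertions in the order (a), (c), (b), (d), (e), since each later one leans on the earlier ones together with the two structural features of $\sigma$: that it is a semigroup homomorphism and that it is a bijection with $\sigma\circ\sigma=\mathrm{id}_S$. The engine for (a) and (c) is simply the hypothesis $\chi\circ\sigma=\chi$: if $\chi(x)=0$ then $\chi(\sigma(x))=(\chi\circ\sigma)(x)=\chi(x)=0$, proving (a); and if $\chi(x)\neq 0$ the same identity gives $\chi(\sigma(x))\neq 0$, proving (c). For (b) I would write $x=ab$ with $a,b\in I_\chi$ and use that $\sigma$ is multiplicative to get $\sigma(x)=\sigma(a)\sigma(b)$, with $\sigma(a),\sigma(b)\in I_\chi$ by (a); hence $\sigma(x)\in I_\chi^2$.

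Before doing (d) I would record the auxiliary consequence that $\sigma$ maps $I_\chi\setminus I_\chi^2$ into itself. Indeed $\sigma(I_\chi^2)\subseteq I_\chi^2$ by (b), so if $x\in I_\chi\setminus I_\chi^2$ had $\sigma(x)\in I_\chi^2$ we would get $x=\sigma(\sigma(x))\in I_\chi^2$, a contradiction; combined with (a) this gives $\sigma(I_\chi\setminus I_\chi^2)\subseteq I_\chi\setminus I_\chi^2$. By the same token, using (c), $\sigma(S\setminus I_\chi)\subseteq S\setminus I_\chi$.

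For (d), take $p\in P_\chi$; then $p\in I_\chi\setminus I_\chi^2$, so $\sigma(p)\in I_\chi\setminus I_\chi^2$ by the auxiliary fact. Now fix arbitrary $u,v\in S\setminus I_\chi$ and set $u'=\sigma(u)$, $v'=\sigma(v)$, which again lie in $S\setminus I_\chi$ by (c); since $\sigma$ is involutive, $u=\sigma(u')$ and $v=\sigma(v')$, hence $u\sigma(p)=\sigma(u'p)$, $\sigma(p)v=\sigma(pv')$ and $u\sigma(p)v=\sigma(u'pv')$. Because $p\in P_\chi$, each of $u'p$, $pv'$, $u'pv'$ lies in $I_\chi\setminus I_\chi^2$, and applying $\sigma$ (which preserves $I_\chi\setminus I_\chi^2$) shows $u\sigma(p),\sigma(p)v,u\sigma(p)v\in I_\chi\setminus I_\chi^2$. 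Since $u,v$ were arbitrary, $\sigma(p)\in P_\chi$. Finally (e) follows from (a) and (d): if $x\in I_\chi\setminus P_\chi$ then $\sigma(x)\in I_\chi$ by (a), and $\sigma(x)\in P_\chi$ would force $x=\sigma(\sigma(x))\in P_\chi$ by (d), contradicting the hypothesis, so $\sigma(x)\in I_\chi\setminus P_\chi$.

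I expect no genuine obstacle here; the only step needing attention is (d), where one must remember to replace $u,v$ by their $\sigma$-preimages in order to invoke the defining property of $P_\chi$ at $p$ rather than at $\sigma(p)$, and to have the auxiliary stability of $I_\chi\setminus I_\chi^2$ under $\sigma$ in hand. Everything else is a direct application of $\chi\circ\sigma=\chi$, the homomorphism property of $\sigma$, and involutivity.
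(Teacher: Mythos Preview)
Your argument is correct in every part. The order (a), (c), (b), auxiliary, (d), (e) is well chosen, and the only place requiring any care---part (d)---is handled properly: you correctly pull the test elements $u,v$ back through $\sigma$ so that the defining property of $P_\chi$ can be invoked at $p$, and you have the needed stability of $I_\chi\setminus I_\chi^2$ under $\sigma$ already in hand.

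By way of comparison, the paper does not actually prove this lemma: it simply cites \cite[Lemma~4.1]{c} for (a)--(c) and \cite[Lemma~3.2]{ass} for (d)--(e). Your write-up is therefore a genuine, self-contained proof where the paper offers only pointers to the literature. The benefit of your approach is that a reader sees immediately that the result is elementary and rests only on $\chi\circ\sigma=\chi$, the homomorphism property of $\sigma$, and involutivity; the benefit of the paper's approach is brevity and avoiding repetition of arguments already recorded elsewhere.
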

		 \begin{proof}
		 	(a)-(c): See \cite[Lemma 4.1]{c}. (d)-(e): See \cite[Lemma 3.2]{ass}
		 \end{proof}
	
	 	\begin{lem}
			\label{ind}
			Let $S$ be  a semigroup, $n\in \textit{N},$
			and $ \chi_1,\chi_2,...\chi_n:S\longrightarrow \mathbb{C}$    be   exponential functions. Then  \\
			(a) $\{\chi_1,\chi_2,...,\chi_n\}$ is linearly independent.  \\
			(b) If  $A: S\setminus I_{\chi}\longrightarrow \mathbb{C}$ is  a non-zero additive function and $\chi$ is exponential on $S$, then    the set   $\{\chi A, \chi\}$ is linearly independent on   $S\setminus I_{\chi}$.
		\end{lem}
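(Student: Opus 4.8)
The plan is to treat the two parts separately: (a) by the classical induction used to establish linear independence of characters, and (b) by a short direct computation that combines additivity with the fact, recalled in Section 2, that $S\setminus I_{\chi}$ is a subsemigroup of $S$.

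\emph{Part (a).} First I would observe that only the distinct members of the set $\{\chi_1,\dots,\chi_n\}$ are relevant to the claim, so I may assume the $\chi_i$ are pairwise distinct and then argue by induction on $n$. The case $n=1$ is immediate, since an exponential is by definition not identically $0$. For the inductive step, suppose $\sum_{i=1}^{n}c_i\chi_i=0$ on $S$; if some coefficient $c_i$ vanishes, the inductive hypothesis applied to the remaining $n-1$ exponentials finishes the argument, so assume all $c_i\neq 0$. Choose $x_0\in S$ with $\chi_1(x_0)\neq\chi_n(x_0)$, which is possible because $\chi_1\neq\chi_n$. Replacing $x$ by $x_0x$ in the relation and using multiplicativity gives $\sum_{i=1}^{n}c_i\chi_i(x_0)\chi_i(x)=0$ for all $x\in S$; subtracting $\chi_n(x_0)$ times the original relation eliminates the $n$-th term and leaves
$$\sum_{i=1}^{n-1}c_i\bigl(\chi_i(x_0)-\chi_n(x_0)\bigr)\chi_i=0$$
on $S$, a relation among $\chi_1,\dots,\chi_{n-1}$ whose leading coefficient $c_1\bigl(\chi_1(x_0)-\chi_n(x_0)\bigr)$ is nonzero — contradicting the inductive hypothesis.

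\emph{Part (b).} Since $\chi$ is an exponential, $S\setminus I_{\chi}\neq\emptyset$, and it is a subsemigroup of $S$. Suppose $a\,\chi A+b\,\chi=0$ on $S\setminus I_{\chi}$ for scalars $a,b$. As $\chi(x)\neq 0$ for every $x\in S\setminus I_{\chi}$, this yields $aA(x)+b=0$ for all such $x$. If $a\neq 0$, then $A$ is constant on $S\setminus I_{\chi}$, say $A\equiv c$; but for $x\in S\setminus I_{\chi}$ we have $x^2\in S\setminus I_{\chi}$, and additivity forces $c=A(x^2)=2c$, hence $c=0$, contradicting $A\neq 0$. Therefore $a=0$, and then $b\,\chi=0$ on the nonempty set $S\setminus I_{\chi}$, on which $\chi$ does not vanish, so $b=0$. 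Hence $\{\chi A,\chi\}$ is linearly independent on $S\setminus I_{\chi}$.

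\emph{Expected main obstacle.} There is no deep difficulty here; the only points that require care are the reduction to pairwise distinct exponentials and the bookkeeping of coefficients in the induction of (a), together with the use of the subsemigroup property of $S\setminus I_{\chi}$ (so that $x^2$ stays in the domain of $A$) in (b).
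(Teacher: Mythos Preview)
Your proposal is correct. The paper itself does not give an argument but simply cites \cite[Theorem 3.18]{g} for (a) and \cite[Lemma 4.4]{ajjj} for (b), so your self-contained proofs differ from the paper only in that they actually supply the details. Your argument for (a) is the standard Artin--Dedekind induction on distinct characters (which is precisely what \cite[Theorem 3.18]{g} records), and your argument for (b) is the natural one, with the key observation being that $S\setminus I_{\chi}$ is a subsemigroup so that $x^{2}$ remains in the domain of $A$; both are clean and complete. One tiny phrasing remark: in the inductive step of (a) you do not really ``contradict the inductive hypothesis'' but rather \emph{apply} it to conclude that the coefficient $c_{1}\bigl(\chi_{1}(x_{0})-\chi_{n}(x_{0})\bigr)$ vanishes, which contradicts the assumption that all $c_i\neq 0$; the logic is fine, only the wording could be tightened.
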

		 \begin{proof}
		 	(a): See \cite [Theorem 3.18]{g}. (b): See \cite[Lemma 4.4]{ajjj}.
		 \end{proof}
		 For convenience   we introduce the following notation.
		 \begin{nota}
		 	\label{not}
		 	Let $\Psi_{A\chi,\rho}:S\longrightarrow\mathbb{C}$ denote a function $f$ of the form of \cite[Theorem 3.1 (B)]{d},  where  $\chi:S\longrightarrow
		 	\mathbb{C}$ is  an exponential   function, $A:S\setminus I_{\chi}\longrightarrow
		 	\mathbb{C}$ is additive,  $\rho:P_{\chi}\longrightarrow
		 	\mathbb{C}$ is the restriction of $f$,   and conditions (i)  and (iI) hold.
		 \end{nota}
		
		 In the present paper we will use frequently  \cite[Theorem 4.2]{vb} in the proof of our main results. The exponential $m$ and the function $\phi_m$ in \cite[Theorem 4.2]{vb} are replaced by $\chi$ and $\Psi_{A\chi,\rho}$ respectively.
			\section{Auxiliary results}
		In this section we solve  the functional equation
		\begin{equation}
		\label{04}
		f(x\sigma(y)z_0)=f(x)f(y), \; x,y \in S,
		\end{equation}
		which  has been solved on semigroups   by Stetk\ae r \cite[Proposition 16]{i} for the particular case $\sigma=Id$.
	 	\begin{prop}
	 		\label{p1}
	 	Let  $f  :S\longmapsto\mathbb{C}$  be a solution     of the functional equation (\ref{04}). Then we have the following  \\
	 	(i) 	$f(z_0)\neq0 \Leftrightarrow f\ne0.$\\
	  (ii)	$f=\chi(z_0)\chi,$	
	 	  where    $\chi:S\longrightarrow \mathbb{C}$ is a  multiplicative function such that  	$\chi\circ \sigma =\chi$.
	 	\end{prop}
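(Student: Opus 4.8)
The plan is to reduce equation~(\ref{04}) to the already-solved Kannappan multiplicative equation~(\ref{4}) by showing that a nonzero solution $f$ is automatically multiplicative. First I would dispose of the trivial case: if $f=0$ then $f(z_0)=0$, and conversely, setting $x=y$ in~(\ref{04}) gives $f(x\sigma(x)z_0)=f(x)^2$, so if $f(z_0)=0$ I would try to propagate this zero; more cleanly, I would prove the contrapositive of (i) by exhibiting, from $f\neq 0$, an element where $f$ does not vanish and then using the functional equation to force $f(z_0)\neq 0$. Concretely, pick $a$ with $f(a)\neq 0$; plugging suitable substitutions should express $f(z_0)$ (or some value forced to be nonzero) as a ratio of values of $f$. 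I expect the cleanest route is: from~(\ref{04}) with $y$ chosen so that $\sigma(y)z_0$ behaves like an identity-surrogate, but since $S$ need not be a monoid one must instead iterate the equation.

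The key computational step is to derive that $f$ is multiplicative up to the constant $f(z_0)$. Applying~(\ref{04}) twice, compute $f(x\sigma(y)z_0\,\sigma(w)z_0)$ in two ways, or better, replace $x$ by $x\sigma(u)z_0$ to get
\[
f(x\sigma(u)z_0\,\sigma(y)z_0)=f(x\sigma(u)z_0)f(y)=f(x)f(u)f(y).
\]
On the other hand $x\sigma(u)z_0\,\sigma(y)z_0 = x\,\sigma(u)\,z_0\,\sigma(y)\,z_0$; using that $\sigma$ is an automorphism, $\sigma(u)z_0\sigma(y)z_0 = \sigma\bigl(u\,\sigma(z_0)\,y\,\sigma(z_0)\bigr)$, so this equals $f\bigl(x\,\sigma(u\sigma(z_0)y\sigma(z_0))z_0\bigr)\cdot(\text{something})$ — I would arrange the bookkeeping so that comparing the two evaluations yields a relation of the form $f(xy)f(z_0)=f(x)f(y)$ or directly $\chi:=f/f(z_0)$ multiplicative. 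Then $f=\chi(z_0)\chi$ follows once one checks $\chi(z_0)=f(z_0)/f(z_0)\cdot f(z_0)$ is consistent, i.e. that the normalization constant is exactly $\chi(z_0)$, which drops out of~(\ref{04}) rewritten as $\chi(z_0)\chi(x)\chi(\sigma(y))\chi(z_0)=\chi(z_0)^2\chi(x)\chi(y)$, forcing $\chi\circ\sigma=\chi$.

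Finally, the symmetry condition $\chi\circ\sigma=\chi$: once $f=c\chi$ with $\chi$ multiplicative and $c=\chi(z_0)$, substitute back into~(\ref{04}): the left side is $c\,\chi(x)\chi(\sigma(y))\chi(z_0)$ and the right side is $c^2\chi(x)\chi(y)=c\,\chi(z_0)\chi(x)\chi(y)$; cancelling $c\,\chi(z_0)\chi(x)$ on the set where these are nonzero gives $\chi(\sigma(y))=\chi(y)$, and on $I_\chi$ one uses Lemma~\ref{sigma}(a) to conclude $\chi(\sigma(y))=0=\chi(y)$ there as well.

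The main obstacle I anticipate is the absence of an identity element: on a monoid one would simply set $y=e$ to read off $f=\alpha\chi$ immediately, but on a bare semigroup one must instead massage the associativity-plus-automorphism identity above to manufacture the multiplicativity, being careful that $\sigma(z_0)$ and $z_0$ need not commute with anything. I would handle this by working entirely inside expressions of the form $f(x\sigma(y)z_0)$ and never isolating $f$ at an unstructured point, much as Stetkær does for the case $\sigma=\mathrm{Id}$ in \cite[Proposition 16]{i}, adapting each step by inserting $\sigma$ and using that $\sigma$ is a bijective homomorphism with $\sigma\circ\sigma=\mathrm{Id}$.
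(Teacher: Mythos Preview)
Your overall plan—iterate the functional equation to force multiplicativity up to a constant, then read off $\chi\circ\sigma=\chi$—is the same strategy the paper follows, but the decisive substitution is missing, and where you attempt one it does not close.

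For (ii), after replacing $x$ by $x\sigma(u)z_0$ you correctly obtain $f(x\sigma(u)z_0\,\sigma(y)z_0)=f(x)f(u)f(y)$. You then rewrite $\sigma(u)z_0\sigma(y)z_0=\sigma\bigl(u\sigma(z_0)y\sigma(z_0)\bigr)$ and hope the whole expression becomes $f\bigl(x\,\sigma(\cdots)z_0\bigr)\cdot(\text{something})$. It does not: $x\,\sigma\bigl(u\sigma(z_0)y\sigma(z_0)\bigr)$ has no trailing $z_0$, so (\ref{04}) does not apply there. If instead you group as $x\,\sigma(u\sigma(z_0)y)\,z_0$ you recover only $f(u\sigma(z_0)y)=f(u)f(y)$, which is still not the multiplicativity relation $f(uy)=\beta f(u)f(y)$ you need; a further nontrivial reduction is required, and ``arrange the bookkeeping'' does not supply it. The paper's key move is specific and different: compute $f\bigl(x\sigma(yzz_0)z_0\bigr)$ two ways to get $f(x\sigma(y))f(zz_0)=f(x\sigma(yz))f(z_0)$, then set $z=\sigma(z_0)$, which yields directly
\[
f(x\sigma(y))=\beta f(x)f(y),\qquad \beta=\dfrac{f(z_0)}{f(\sigma(z_0)z_0)}\neq 0.
\]
From this, multiplicativity of $\chi:=\beta f$ and the identity $\chi\circ\sigma=\chi$ fall out in two lines, and $1/\beta=\chi(z_0)$ follows by substituting back into (\ref{04}).

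For (i), your sketch (``propagate this zero'', ``express $f(z_0)$ as a ratio'') is only a gesture. The paper's argument is short and worth knowing: if $f(z_0)=0$, then $f(xz_0)^2=f\bigl(xz_0\,\sigma(xz_0)\,z_0\bigr)=f(xz_0\sigma(x))f(z_0)=0$, so $f(xz_0)=0$ for all $x$; hence $f(x)f(y)=f(x\sigma(y)z_0)=0$ for all $x,y$, i.e.\ $f=0$.

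A small correctness issue in your last paragraph: invoking Lemma~\ref{sigma}(a) to treat $y\in I_\chi$ is circular, since that lemma already assumes $\chi\circ\sigma=\chi$. The right argument is simpler: once $\chi(x)\chi(\sigma(y))=\chi(x)\chi(y)$ holds for all $x,y$, fix any $x_0$ with $\chi(x_0)\neq 0$ and cancel to get $\chi\circ\sigma=\chi$ on all of $S$.
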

	 	\begin{proof}
	 			Let $x,y,z\in S$.
	 		(i)	If $f=0$ it is clear that   $f(z_0)=0$. Conversely suppose that $f(z_0)=0$.
	 	 By using (\ref{04}) we get that
	 		\begin{equation*}
	 		\label{88}
	 		f(xz_0\sigma(xz_0)z_0)= f (xz_0)^2
	 		= f(xz_0\sigma(x)) f(z_0)=0,
	 		\end{equation*}
	 		which gives
	 		$  f (xz_0)=0.$
	 		Therefore using this and (\ref{04}) we obtain
	 		$$f(x\sigma(y)z_0)=f(x)f(y)=0.$$
	 		This  implies that $f=0$. \\
	 (ii)	The case $f=0$ is trivial, so from now on we my assume that $f\neq0$. By using  again (\ref{04}) we find that
	 		\begin{equation*}
	 		\label{02}
	 		f(x \sigma(yzz_0)z_0)= f (x\sigma(y))f(zz_0)
	 		= f(x \sigma(yz)) f(z_0).
	 		\end{equation*}
	 		Replacing $z$ by $\sigma(z_0)$  in the last equation  we  obtain
	 		\begin{equation}
	 		\label{09}
	 		f(x\sigma(y))f(\sigma(z_0)z_0)= f(x \sigma(y)z_0) f(z_0)=f(x)f(y)f(z_0).
	 		\end{equation}
	 		Since $f\neq 0$ by assumption, we have $f(z_0)\neq0$ by Proposition \ref{p1}(i) and then  the
	 		right hand side of (\ref{09}) is non-zero. Therefore we get  $f(\sigma(z_0)z_0)\neq0$. Next  dividing  identity (\ref{09}) by $f(\sigma(z_0)z_0)$ we get
	 		\begin{equation}
	 		\label{333}
	 		f(x\sigma(y) )=\beta f(x)f(y),
	 		\end{equation}
	 		where $\beta:= \dfrac{f(z_0)}{f(\sigma(z_0)z_0)}\neq0$. Now by using  (\ref{333}) we get 	\begin{equation*}
	 		f(x\sigma(yz) )= \beta f (x )f(yz)
	 		= \beta f(x\sigma(y) ) f(z )
	 		= \beta^2 f(x)f(y)f(z).
	 		\end{equation*}
	 		Since $f\neq0$ the previous identities imply that $f(yz)= \beta f(y)f(z).$
	 		This shows that    $\beta f=:\chi $, where $\chi  :S\longmapsto\mathbb{C}$ is a multiplicative function (moreover, it is an exponential  since $f\neq0$ and $\beta \neq0$), so $f=\chi/\beta$.  By using (\ref{333}) again  we have
	 		\begin{equation*}
	 		\chi(x)\chi\circ\sigma(y)= \chi(x\sigma(y))=\chi(x)\chi(y),
	 		\end{equation*}
	 		which implies that
	 		$\chi\circ \sigma =\chi$ because $\chi\neq0$.
	 	Finally by using (\ref{04}) we have
	 		\begin{equation*}
	 		\dfrac{1}{\beta}[\chi(x)\chi\circ \sigma(y)  \chi(z_0)] 	= \dfrac{\chi(x\sigma(y)z_0)}{\beta}= \dfrac{\chi(x) }{\beta}\dfrac{\chi(y) }{\beta},
	 		\end{equation*}
	 		which implies that $\dfrac{1}{\beta}=\chi(z_0)$. Hence $f=\chi(z_0) \chi$.
	 		The converse statement is evident.
	 	\end{proof}
		\section{ Preparatory works for Kannappan-sine subtraction law }
			In this section we prove some useful lemmas which will be used  in the proof of our main results (Theorem \ref{t1}).
			
		  \begin{lem}
		 						  	\label{uu}
		 						 Let $f, g :S\longmapsto\mathbb{C}$ be a solution  of the functional equation (\ref{6}) such that $f $ and  $g $ are linearly independent. Then $f(z_0)=0$.
		 						  \end{lem}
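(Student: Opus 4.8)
The plan is to argue by contradiction: assume $f(z_0)\neq 0$ and deduce that $g$ is a scalar multiple of $f$, contradicting independence. The engine of the argument is associativity applied to the \emph{doubly shifted} products $x\sigma(y)z_0\sigma(z)z_0$. Since $\sigma$ is an involutive automorphism, $\sigma(y)z_0\sigma(z)=\sigma(y)\sigma(\sigma(z_0))\sigma(z)=\sigma(y\sigma(z_0)z)$, so
\[
x\sigma(y)z_0\sigma(z)z_0 = x\,\sigma(y\sigma(z_0)z)\,z_0 .
\]
Note that single substitutions into (\ref{6}) are not enough here (they are consistent with $f(z_0)\neq 0$), and products such as $f(x\sigma(yz)z_0)$ only bring in the out-of-scope quantity $f(x\sigma(y))$, so one really must work with these double products.

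First I would compute $f\big((x\sigma(y)z_0)\sigma(z)z_0\big)$ in two ways. Using (\ref{6}) with first entry $x\sigma(y)z_0$ gives $\bigl(f(x)g(y)-f(y)g(x)\bigr)g(z)-f(z)g(x\sigma(y)z_0)$; using the regrouping above and then (\ref{6}) gives $f(x)g(y\sigma(z_0)z)-f(y\sigma(z_0)z)g(x)$. Equating and solving for $g(x\sigma(y)z_0)$, the right-hand side is, for each fixed $y$ and each $z$ with $f(z)\neq 0$ (such $z$ exists since $f\neq 0$), a linear combination of $f(x)$ and $g(x)$; because $f,g$ are independent these coefficients cannot depend on $z$, so there are functions $\mu,\nu\colon S\to\mathbb{C}$ with $g(x\sigma(y)z_0)=\mu_y f(x)+\nu_y g(x)$ for all $x,y$. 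Plugging this back and comparing the coefficients of $f(x)$ and of $g(x)$ yields the identities
\[
g(y\sigma(z_0)z)=g(y)g(z)-\mu_y f(z),\qquad f(y\sigma(z_0)z)=f(y)g(z)+\nu_y f(z)
\]
for all $y,z$. Putting $z=z_0$ in the second identity and comparing with the case $y=z_0$ of (\ref{6}), namely $f(w\sigma(z_0)z_0)=g(z_0)f(w)-f(z_0)g(w)$, gives $f(z_0)\bigl(\nu_w+g(w)\bigr)=0$; since $f(z_0)\neq 0$ this forces $\nu_w=-g(w)$ for every $w\in S$.

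Next I would repeat the computation for $g$: expand $g\big((x\sigma(y)z_0)\sigma(z)z_0\big)$ via the formula $g(x\sigma(y)z_0)=\mu_y f(x)-g(y)g(x)$ applied to both groupings (using also the identity for $g(y\sigma(z_0)z)$ above), and compare coefficients of $f(x)$ and $g(x)$. The $g(x)$-coefficients give the key relation
\[
\mu_y f(z)+\mu_z f(y)=2g(y)g(z),\qquad y,z\in S .
\]
Fixing one $z_1$ with $f(z_1)\neq 0$ shows $\mu_y=a f(y)+b g(y)$ for constants $a,b$; substituting this back, the key relation becomes $2a\,f(y)f(z)+b\bigl(f(y)g(z)+f(z)g(y)\bigr)-2g(y)g(z)=0$ for all $y,z$. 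Thus the symmetric bilinear form $B\bigl((s,t),(s',t')\bigr)=2a\,ss'+b(st'+s't)-2tt'$ vanishes on every pair $\bigl((f(y),g(y)),(f(z),g(z))\bigr)$. But independence of $f,g$ means the vectors $(f(y),g(y))$ span $\mathbb{C}^2$, so $B\equiv 0$ on $\mathbb{C}^2\times\mathbb{C}^2$; in particular $B\bigl((0,1),(0,1)\bigr)=-2=0$, a contradiction. Hence $f(z_0)=0$.

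The conceptual content is modest; the main obstacle is the bookkeeping — spotting that the usable associativity identities are exactly those for $f$ and $g$ evaluated at $x\sigma(y)z_0\sigma(z)z_0$, and carrying the two bilinear expansions far enough to isolate $\mu_y f(z)+\mu_z f(y)=2g(y)g(z)$, from which the contradiction is immediate.
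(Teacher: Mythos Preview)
Your argument is correct. Both your proof and the paper's begin the same way: exploit associativity at the doubly shifted word (you use $x\sigma(y)z_0\sigma(z)z_0=x\,\sigma(y\sigma(z_0)z)\,z_0$, the paper uses the essentially equivalent $x\sigma(yzz_0)z_0$ and then specializes $z=\sigma(z_0)$), and both arrive at a representation $g(x\sigma(y)z_0)=\mu_y f(x)-g(x)g(y)$ with $\mu_y=af(y)+bg(y)$.

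The endgames, however, are genuinely different. The paper substitutes back to pin down $a=-b^2/4$, factors to obtain $(g-\tfrac{b}{2}f)(x\sigma(y)z_0)=-(g-\tfrac{b}{2}f)(x)(g-\tfrac{b}{2}f)(y)$, and then invokes Proposition~\ref{p1} (the Kannappan--d'Alembert result) to force $g-\tfrac{b}{2}f=-\chi(z_0)\chi$ for an exponential $\chi$ with $\chi(z_0)\neq 0$; a contradiction follows from the separately established vanishing of $f$ and $g$ at $z_0\sigma(z_0)z_0$. Your route keeps the auxiliary variable $z$ free, extracts the symmetric identity $\mu_y f(z)+\mu_z f(y)=2g(y)g(z)$ from the $g(x)$-coefficient of the second associativity expansion, and then finishes with the bilinear-form/spanning argument on $\mathbb{C}^2$. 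Your proof is therefore self-contained (it does not rely on Proposition~\ref{p1} or on the classification of multiplicative solutions), and the contradiction falls out in one line; the paper's approach, on the other hand, makes visible the structural fact that $g-\tfrac{b}{2}f$ would have to be an exponential, which foreshadows the factorizations used in the main theorem.
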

		 						  \begin{proof}
		 						  	 By contradiction, suppose that $f(z_0)\neq0$.	Let $x,y,z\in S$ be arbitrary. By  using (\ref{6}) we get
		 						  		\begin{equation*}
		 						  	 f((x\sigma(yzz_0)z_0  )  =  f(x\sigma(yz ))g(z_0)-f(z_0)g(x\sigma(yz ))
		 						  			= f(x)g(yzz_0)-f(yzz_0)g(x).
		 						  		\end{equation*}
		 						  		Replacing $z $ by $\sigma(z_0)$ in   the previous identities we get
		 						  		
		 						  		  $f(x\sigma(y  )z_0)g(z_0)-f(z_0)g(x\sigma(y )z_0)$
		 						  		    \begin{eqnarray*}
		 						  		  &=& g(z_0)[f(x)g(y)- f(y)g(x)]-f(z_0)g(x\sigma(y )z_0)\\
		 						  		 &=& f(x)g(y\sigma(z_0)z_0)-f(y\sigma(z_0)z_0)g(x)\\     &	=& f(x)g(y\sigma(z_0)z_0)-g(x)[f(y)g(z_0)-f(z_0)g(y)],
		 						  		\end{eqnarray*}
		 						  		and then we deduce that
		 						  		\begin{equation}
		 						  		\label{n3}
		 						  		-f(z_0)[g(x\sigma(y )z_0)+g(x)g(y)]=f(x)[g(y\sigma(z_0)z_0)-g(y)g(z_0)].
		 						  		\end{equation}
		 						  	 Since $f(z_0)\neq0$, dividing equation (\ref{n3}) by $f(z_0)$ we obtain
		 						  		\begin{equation}
		 						  		\label{n4}
		 						  		g(x\sigma(y )z_0)= -g(x)g(y)+f(x)\phi(y),
		 						  		\end{equation}
		 						  		where
		 						  		\begin{equation*}
		 						  		\phi(y):= -\dfrac {g(y\sigma(z_0)z_0)-g(y)g(z_0)} {f(z_0)}.
		 						  		\end{equation*}
		 						  		Now, substituting (\ref{n4}) into (\ref{n3})  we get
		 						  		
		 						  		$-f(z_0)[-g(x)g(y)+f(x)\phi(y)+g(x)g(y)]$
		 						  			\begin{equation*}
		 						  			=f(x)[-g(y)g(z_0)+f(y)\phi(z_0)-g(y)g(z_0)],
		 						  			\end{equation*}
		 						  		 which gives
		 						  		$
		 						  		f(z_0)f(x)\phi(y)=f(x)[ 2g(y)g(z_0)-f(y)\phi(z_0)].
		 						  		$
		 						  	Since $ f\neq0$ because $f(z_0)\neq0$, we obtain
		 						  			$\phi(y)=a f(y)+bg(y)$, where
		 						  			    $a:=-\dfrac{\phi(z_0)}{f(z_0) }$ \;and\; $b:=\dfrac{2g(z_0)}{f(z_0) }$,
		 						  		and then by using this we obtain $$a =-\dfrac{\phi(z_0)}{f(z_0) }=-\dfrac{a f(z_0)+bg(z_0)}{f(z_0) }=-a-\dfrac{b^2}{2},$$ which implies that $a=- {b^2}/{4}$. Therefore
$\phi(y)=-\dfrac{b^2}{4} f(y)+bg(y).$ This gives a new form to (\ref{n4}) as follows
		 						  		\begin{equation}
		 						  		\label{132}
		 						  		g(x\sigma(y )z_0)= -g(x)g(y)-\dfrac{b^2}{4} f(x)f(y)+bf(x)g(y) .
		 						  		\end{equation}
		 						  	  Now, by using the system (\ref{6}) and (\ref{132}) we obtain
		 						  			 	\begin{equation}
		 						  			 	\label{n5}
		 						  			 	  ( g-\dfrac{b}{2} f) (x\sigma(y)z_0) =-(g-\dfrac{b}{2} f) (x ) (g-\dfrac{b}{2} f)
(y).		 						  			 	\end{equation}
		 						  			 	   		 						  			 	   By putting $x=y=z_0$ in (\ref{n3}) and in (\ref{6}) we get respectively
		 						  			 	   \begin{equation}
		 						  			 	   \label{ss}
		 						  			 	     g(z_0\sigma(z_0)z_0)=0,
		 						  			 	     \end{equation}    and
		 						  			 	    \begin{equation}
		 						  			 	  \label{24}  f(z_0\sigma(z_0)z_0)= f(z_0)g(z_0)-f(z_0)g(z_0)=0.
		 						  			 	   \end{equation}
		 						  			 	    Next,
		 						  			 	  applying Proposition \ref{p1} to  identity (\ref{n5})  we infer that there exists  a multiplicative  function  $\chi$ on $S$ such that $ g-\dfrac{b}{2} f=-\chi(z_0)\chi$  with     $\chi\circ \sigma=\chi$.  Moreover,  we have     $\chi(z_0)\neq0$ (so $\chi$ is an exponential) since $g-\dfrac{b}{2} f\neq0$, because  $f$ and $g$ are linearly independent.  Finally putting $x=y=z_0$ in (\ref{n5})  and using (\ref{ss})  and (\ref{24}) we obtain
		 						  			 	   $$   (g-\dfrac{b}{2} f)(z_0\sigma(z_0)z_0)=-\chi(z_0)^4=0,$$
		 						  			  which is a contradiction  because $\chi(z_0)\neq0$.  Thus     $f(z_0)=0$.
		 						  	\end{proof}
		 						  	\begin{rem}
		 						  	 If $S$ is a   group (or a monoid) and $f,g$ is a solution of (\ref{6}), then we can check easily that  $f(z_0)=0$. Indeed,   putting  $x=y=e$ in (\ref{6})  we get $f(z_0)= f(e)g(e)-f(e)g(e)=0$. So working with  an identity element or a group inversion gives  advantages and sometimes makes   calculations simpler. This justifies why the proof of Lemma \ref{uu} is long in the absence of identity element $e$.
		 						  	\end{rem}
		 						 \begin{lem}
		 						 		\label{r8}
		 						 		Let    $f, g :S\longmapsto\mathbb{C}$ be a solution  of the functional equation (\ref{6}) such that $f$ and $g$ are linearly independent. Then   for all $x,y \in S$ we have
		 						 		\begin{equation}
		 						 		\label{08}
		 						 		g(\sigma(z_0)z_0 )f(x\sigma(y))= g(z_0)[f(x)g(y)-f(y)g(x)]+f(\sigma(z_0)z_0)g(x\sigma(y)).
		 						 		\end{equation}
		 						 \end{lem}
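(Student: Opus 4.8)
The plan is to evaluate the single element $f(x\sigma(y)z_0\sigma(z_0)z_0)$ in two incompatible ways using associativity and the fact that $\sigma$ is an involutive automorphism, and then to compare. As a preliminary step I would invoke Lemma~\ref{uu}: since $f$ and $g$ are assumed linearly independent, it gives $f(z_0)=0$, which is what lets the two computations collapse to something usable.

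For the first evaluation, group the element as $(x\sigma(y)z_0)\,\sigma(z_0)\,z_0$ and apply \eqref{6} with the pair $(x\sigma(y)z_0,\,z_0)$, obtaining $f(x\sigma(y)z_0\sigma(z_0)z_0)=f(x\sigma(y)z_0)g(z_0)-f(z_0)g(x\sigma(y)z_0)=f(x\sigma(y)z_0)g(z_0)$ by Lemma~\ref{uu}. A further application of \eqref{6} to $f(x\sigma(y)z_0)=f(x)g(y)-f(y)g(x)$ shows that this first evaluation equals $g(z_0)\big[f(x)g(y)-f(y)g(x)\big]$. For the second evaluation, note that $\sigma(\sigma(z_0)z_0)=z_0\sigma(z_0)$ because $\sigma$ is an involutive automorphism, so $x\sigma(y)z_0\sigma(z_0)z_0=x\sigma(y)\,\sigma(\sigma(z_0)z_0)\,z_0$; applying \eqref{6} with the pair $(x\sigma(y),\,\sigma(z_0)z_0)$ gives $f(x\sigma(y)z_0\sigma(z_0)z_0)=f(x\sigma(y))g(\sigma(z_0)z_0)-f(\sigma(z_0)z_0)g(x\sigma(y))$. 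Equating the two expressions and rearranging yields exactly \eqref{08}.

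The only genuinely nonroutine point is recognising which auxiliary element to plug into \eqref{6}, namely $x\sigma(y)z_0\sigma(z_0)z_0$ with its two readings $(x\sigma(y)z_0)\cdot\sigma(z_0)\cdot z_0$ and $x\sigma(y)\cdot\sigma(\sigma(z_0)z_0)\cdot z_0$; once that is in hand, the argument is just bookkeeping with associativity, the identity $\sigma\circ\sigma=\mathrm{id}$, and the single input $f(z_0)=0$ supplied by Lemma~\ref{uu}.
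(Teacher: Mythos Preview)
Your argument is correct and is essentially the same as the paper's: both evaluate $f(x\sigma(y)z_0\sigma(z_0)z_0)$ in two ways using \eqref{6} and then invoke $f(z_0)=0$ from Lemma~\ref{uu}. The only cosmetic difference is that the paper first records the general four-variable identity $f(x\sigma(y))g(zt)-f(zt)g(x\sigma(y))=f(x\sigma(yz))g(t)-f(t)g(x\sigma(yz))$ and then specializes $z=\sigma(z_0)$, $t=z_0$, whereas you go straight to the specialized element; the underlying computation is identical.
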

		 						 \begin{proof}
		 					 	Let $x,y,z,t\in$  $S$ be arbitrary. Using (\ref{6}) we have
		 						 \begin{equation*}
		 						 	f(x\sigma(yz)\sigma(t) z_0) =f(x\sigma(yz) )g( t)-f(t)g(x\sigma(yz))
		 						 \end{equation*}
		 						 and
		 						 \begin{equation*}
		 						 	f(x\sigma(y)\sigma(zt) z_0) =	f(x\sigma(y))g(zt)- f(zt)g(x\sigma(y)).
		 						 \end{equation*}
		 						 Since 	$f(x\sigma(yz)\sigma(t) z_0)  =f(x\sigma(y)\sigma(zt) z_0)$	we deduce that
		 						 \begin{equation}
		 						 	\label{0.1}
		 						 	f(x\sigma(y))g(zt)- f(zt)g(x\sigma(y))=f(x\sigma(yz) )g( t)-f(t)g(x\sigma(yz)).
		 						 \end{equation}
		 						 By putting $z =\sigma(z_0)$ and $t = z_0$  in (\ref{0.1}), and then using the fact that $ f(z_0)=0$ (Lemma \ref{uu})		 we get
		 						
		 						 $f(x\sigma(y))g(\sigma(z_0) z_0)- f(\sigma(z_0) z_0)g(x\sigma(y))$	
		 						 \begin{eqnarray*}
		 						 	&=&f(x\sigma(y)z_0)g(z_0) -f(z_0 )g(x\sigma(y)z_0) \\
		 						 	&=&   g(z_0)[f(x)g(y)-f(y)g(x)],
		 						 \end{eqnarray*}
		 						 which implies that
		 						 \begin{equation*}
		 						 	g(\sigma(z_0)z_0 )f(x\sigma(y))= g(z_0)[f(x)g(y)- f(y)g(x)]+f(\sigma(z_0)z_0)g(x\sigma(y)).
		 						 \end{equation*}
		 						
		 						 	\end{proof}	  	
		 						 \begin{lem}
		 						 	\label{r10}
		 						 	Let   $f, g :S\longmapsto\mathbb{C}$ be a solution  of the functional equation (\ref{6}) such that     $ f $  and $g$ are linearly independent.   Then we have the following \\
		 						 	(i)  $g(z_0)\neq0$ \\
		 						 	(ii ) If $g(\sigma(z_0)z_0)=0
		 						 	$ then $ f(\sigma(z_0)z_0)\neq0$
		 						   \end{lem}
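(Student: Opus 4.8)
\emph{Overall approach.} I would prove both statements by contradiction, deriving (ii) as a short consequence of (i), so I treat (ii) first. Assume $g(\sigma(z_0)z_0)=0$ and, for contradiction, also $f(\sigma(z_0)z_0)=0$. Inserting these two equalities into identity (\ref{08}) of Lemma \ref{r8} annihilates every term except one and leaves $g(z_0)\bigl[f(x)g(y)-f(y)g(x)\bigr]=0$ for all $x,y\in S$. Since $g(z_0)\neq0$ by part (i), we get $f(x)g(y)=f(y)g(x)$ for all $x,y$; as $f\neq0$ (linear independence), choosing $x_0$ with $f(x_0)\neq0$ gives $g=\frac{g(x_0)}{f(x_0)}f$, contradicting the linear independence of $f$ and $g$. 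Hence $f(\sigma(z_0)z_0)\neq0$, which is (ii).

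\emph{Part (i): set-up.} Suppose, for contradiction, that $g(z_0)=0$. Then $f(z_0)=0$ by Lemma \ref{uu}, so (\ref{08}) degenerates into
\begin{equation*}
g(\sigma(z_0)z_0)\,f(x\sigma(y))=f(\sigma(z_0)z_0)\,g(x\sigma(y)),\qquad x,y\in S,
\end{equation*}
which, since $\sigma$ is bijective and hence $\{x\sigma(y):x,y\in S\}=S^2$, is a constant-coefficient linear relation between $f$ and $g$ on $S^2$. Before using it I would collect vanishing data: putting $y=z_0$, resp.\ $x=z_0$, in (\ref{6}) gives $f\equiv0$ on $S\sigma(z_0)z_0$ and on $z_0Sz_0$; expanding $f\bigl(x\sigma(y)z_0\sigma(z_0)z_0\bigr)$, $f\bigl(z_0\sigma(y)z_0\sigma(z_0)z_0\bigr)$ and $f\bigl(x\sigma(z_0)z_0\sigma(t)z_0\bigr)$ by (\ref{6}) in the two available ways (the trick used in the proof of Lemma \ref{r8}) and invoking linear independence give, respectively, $g\equiv0$ on $S\sigma(z_0)z_0$, $g\equiv0$ on $z_0Sz_0$, and $f\equiv g\equiv0$ on $z_0\sigma(z_0)S$.

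\emph{Part (i): the case analysis.} If $g(\sigma(z_0)z_0)\neq0$, then $f=c\,g$ on $S^2$ with $c:=f(\sigma(z_0)z_0)/g(\sigma(z_0)z_0)$; since $x\sigma(y)z_0\in S^2$, (\ref{6}) becomes $c\,g(x\sigma(y)z_0)=f(x)g(y)-f(y)g(x)$. Putting $z=\sigma(z_0)$ in (\ref{0.1}) (so that $x\sigma(yz)=x\sigma(y)z_0$) and substituting $f=cg$ on $S^2$ yields $g(x\sigma(y))\,(cg-f)(\sigma(z_0)t)=g(x\sigma(y)z_0)\,(cg-f)(t)$; since $\sigma(z_0)t\in S^2$ the left-hand side is $0$, so $g(x\sigma(y)z_0)\,(cg-f)(t)=0$ for all $x,y,t$, forcing either $cg=f$ on all of $S$ or $g\equiv0$ on $S^2z_0$ and then $f(x)g(y)=f(y)g(x)$ for all $x,y$ by (\ref{6}); both alternatives make $f,g$ linearly dependent, a contradiction. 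If $g(\sigma(z_0)z_0)=0$ but $f(\sigma(z_0)z_0)\neq0$, the displayed relation forces $g\equiv0$ on $S^2$; then (\ref{0.1}) collapses to $f(x\sigma(yz))g(t)=0$, so $f\equiv0$ on $S^3$ (as $g\not\equiv0$), and since every $x\sigma(y)z_0\in S^3$, (\ref{6}) again gives $f(x)g(y)=f(y)g(x)$, a contradiction. Finally, if $g(\sigma(z_0)z_0)=f(\sigma(z_0)z_0)=0$, the displayed relation is empty, and one argues directly, iterating (\ref{6}) and (\ref{0.1}) with arguments drawn from the sets $S\sigma(z_0)z_0$, $z_0Sz_0$, $z_0\sigma(z_0)S$ in order to force $g\equiv0$ on $S^2$, reducing to the second case. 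In every case a contradiction results, so $g(z_0)\neq0$.

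\emph{Expected main obstacle.} Everything funnels into part (i), and within it into the last case, where $f$ and $g$ both vanish at $\sigma(z_0)z_0$: then (\ref{08}) is silent, there is no identity element to fall back on, and the contradiction must be produced purely from (\ref{6}) and (\ref{0.1}) together with careful bookkeeping of the (one-sided-ideal-like) subsets of $S$ on which $f$ and $g$ are known to vanish. Propagating that vanishing information all the way to $g\equiv0$ on $S^2$ is the crux.
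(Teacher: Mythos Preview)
Your argument for (ii) is correct and is exactly what the paper does.

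For (i), however, your route through identity (\ref{08}) is a detour that creates the very difficulty you flag at the end. The third case of your case analysis --- where $g(\sigma(z_0)z_0)=f(\sigma(z_0)z_0)=0$ and (\ref{08}) becomes vacuous --- is not actually proved: ``one argues directly, iterating (\ref{6}) and (\ref{0.1}) \dots\ in order to force $g\equiv 0$ on $S^2$'' is a plan, not an argument, and it is not clear that the vanishing on the special subsets $S\sigma(z_0)z_0$, $z_0Sz_0$, $z_0\sigma(z_0)S$ can be propagated to all of $S^2$ along those lines. So as written there is a genuine gap.

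The paper bypasses the whole case split with a two–line argument that never invokes Lemma~\ref{r8}. Assuming $g(z_0)=0$ (and $f(z_0)=0$ by Lemma~\ref{uu}), substitute $(x,yz_0)$ and $(x\sigma(y),z_0)$ into (\ref{6}); the left-hand sides coincide because $x\sigma(yz_0)z_0=x\sigma(y)\sigma(z_0)z_0$, while the second right-hand side is $f(x\sigma(y))g(z_0)-f(z_0)g(x\sigma(y))=0$. Hence
\[
f(x)\,g(yz_0)-f(yz_0)\,g(x)=0\qquad\text{for all }x,y\in S,
\]
and linear independence (in the variable $x$) forces $f(yz_0)=g(yz_0)=0$ for every $y$. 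In particular $f(x\sigma(y)z_0)=0$, so (\ref{6}) gives $f(x)g(y)=f(y)g(x)$ for all $x,y$, contradicting linear independence. This is the missing idea: work on $Sz_0$ rather than on $S^2$, so that the functional equation (\ref{6}) itself, with its built-in factor $z_0$, immediately closes the loop.
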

		 						    \begin{proof} By Lemma \ref{uu} we have $f(z_0)=0$.
		 						    	(i)   By  a contradiction,  we suppose that $g(z_0)=0$. Making    the substitutions  $(x,\sigma(yz_0) )$ and  $(x\sigma(y),z_0 )$ in (\ref{6}) we get   that
		 						    	\begin{eqnarray*}
		 						    		f(x\sigma(yz_0)z_0)&= & f(x )g(yz_0)-f(yz_0)g(x) \\
		 						    		& =& f(x\sigma(y))g(z_0)-f(z_0)g(x\sigma(y))=0.
		 						    	\end{eqnarray*}
		 						    	It follows  that
		 						    	\begin{equation*}
		 						    	f(x )g(yz_0)-f(yz_0)g(x)=0, \; x\in S.
		 						    	\end{equation*}
		 						    	Since $f$ and $g$ are linearly independent we obtain that  $g(yz_0)=f(yz_0)=0$ for all $y\in S$. Then by using     (\ref{6}) we get that
		 						    	\begin{equation*}
		 						    	f(x\sigma(y)z_0)=f(x)g(y)-f(y)g(x)=0,\; x,y\in S,
		 						    	\end{equation*}
		 						    	which implies that
		 						    	\begin{equation*}
		 						    	\label{03}
		 						    	f(x)g(y)=f(y)g(x), \; x,y\in S.
		 						    	\end{equation*}
		 						    	This contradict the fact that  $f$ and $g$ are linearly independent. So we conclude that $g(z_0)\neq0$.\\
		 						    	(ii)  Assume, for the sake of contradiction, that  $f(\sigma(z_0)z_0 )=0$. Since $g(\sigma(z_0)z_0 )=0$ by hypothesis,  equation (\ref{08})  reduces to
		 						    	\begin{equation*}
		 						    	g(z_0) f(x)g(y)-g(z_0)f(y)g(x)=0, \; x,y\in S,
		 						    	\end{equation*}
		 						    	which becomes
		 						    	\begin{equation*}
		 						    	f(x)g(y)=  f(y)g(x), \; x,y\in S,
		 						    	\end{equation*}
		 						    	since
		 						    	$g(z_0)\neq0$ by (i).   This   contradicts that $f $ and $g$ are linearly independent.  Therefore $f(\sigma(z_0)z_0) \neq0$.
		 						   	\end{proof}
		 						   	\section{Main results}
		 						   	Now we are ready to find out the solutions of Kannappan-sine subtraction law (\ref{6}) on semigroups.
		 						   	 \begin{thm}
		 						   	\label{t1}
		 						     The solutions $f,g :S\longrightarrow \mathbb{C}$ of the  Kannappan-sine subtraction law (\ref{6}) can be listed as follows.\\
		 						   	(1) $f=0$ and $g$ arbitrary.\\
		 						   	(2) $S\neq S^2z_0 $, $f$ is any non-zero function such that $f(S^2z_0)=0$  and $g=\kappa f$, where $\kappa\in \mathbb{C}$ is a constant.
		 						   	 \\
		 						   	(3) There exist  constants $\gamma, b \in \mathbb{C}^{*}, c  \in \mathbb{C} \setminus \{\pm 1\} $ and an exponential function $\chi$ on
		 						   	 $S$  with $\chi \neq \chi\circ\sigma$, $\chi(z_0)=\dfrac{-2b}{1+c}$ and $\chi\circ\sigma(z_0)=\dfrac{  2b}{1-c}$ such that
		 						   	\begin{equation*}
		 						   	f=  \dfrac{\chi+\chi\circ\sigma}{2\gamma} +c\dfrac{\chi-\chi\circ\sigma}{2\gamma}   \hspace{0.6cm} and \hspace{0.6cm}  g=b (\chi-\chi\circ\sigma ).
		 						   	\end{equation*}
		 						   	(4) There exist  constants $\beta, b \in \mathbb{C}^{*}, c  \in \mathbb{C}   $ and an exponential function $\chi$ on
		 						   	$S$ with
		 						   	$\chi \neq \chi\circ\sigma$ and $\chi(z_0)=  \chi\circ\sigma(z_0)=1/\beta$ such that
		 						   	\begin{equation*}
		 						   	f=b
		 						    (\chi-\chi\circ\sigma) \hspace{0.4cm} and \hspace{0.4cm}  g=  \dfrac{\chi+\chi\circ\sigma}{2\beta}+c\dfrac{\chi-\chi\circ\sigma}{2\beta}.
		 						   	\end{equation*}
		 						   	(5) There exist  constants $\alpha,\delta, b  \in \mathbb{C}^{*}, c\in \mathbb{C}  $    and an exponential  function
		 						   		$\chi$  on	$S$
		 						   		with  $ \alpha \neq\pm (2b \delta +\alpha c)$,  $\chi \neq \chi\circ\sigma$, $	\chi (z_0)=	\dfrac {2b }{\alpha (1+c)+2b\delta   } \;\; and \;\; 	 \chi\circ \sigma(z_0)=\dfrac {2b }{\alpha (c-1)+2b\delta   }$
		 						   	 such that
		 						   	\begin{equation*}
		 						    f= \alpha   \dfrac{\chi+\chi\circ\sigma}{2\delta }+  (\alpha c+2b\delta) \dfrac{\chi-\chi\circ\sigma}{2\delta }   \hspace{0.2cm} and \hspace{0.2cm}  g=   \dfrac{\chi+\chi\circ\sigma}{2\delta} +c  \dfrac{\chi-\chi\circ\sigma}{2\delta}.
		 						   	\end{equation*}
		 						   	(6) There exist  constants $\gamma,  c\in \mathbb{C}^{*} $, an exponential function $\chi$ on $S$, a non-zero
		 						   		additive
		 						   	function $A: S \setminus  I_{\chi}\longrightarrow \mathbb{C}$ and a function $\rho: P_{\chi}\longrightarrow \mathbb{C}$ with $\chi\circ\sigma=\chi, A \circ\sigma=$
		 						   		$-A, \rho\circ\sigma=-\rho $ and
		 						   	$\chi(z_0)=A(z_0)=\dfrac{-1}{c}$ such that
		 						   	\begin{equation*}
		 						   	f= \gamma\left( \chi+c \Psi_{A\chi,\rho}\right)  \hspace{0.4cm} and \hspace{0.4cm} g= \Psi_{A\chi,\rho}.
		 						   	\end{equation*}
		 						   	(7) There exist  constants $ \alpha,  c \in \mathbb{C}$ and $ \delta \in \mathbb{C}^{*}$, an exponential function $\chi$ on $S$,
		 						   a non-zero
		 						   	additive
		 						   	function $A: S \setminus  I_{\chi}\longrightarrow \mathbb{C}$ and a function $\rho: P_{\chi}\longrightarrow \mathbb{C}$ with $\alpha c+\delta\neq0$, $\chi\circ\sigma$
		 						   $=\chi,  A \circ\sigma=-A,  \rho\circ\sigma=-\rho $,
		 						   	$\chi(z_0)=\dfrac{1}{\alpha c+\delta} $   and  $ A(z_0)=-\dfrac{\alpha}{\alpha c+\delta }$ such that
		 						   	\begin{equation*}
		 						   	f=   \dfrac{1}{\delta} [  \alpha\chi+ (\alpha c +\delta) \Psi_{A\chi,\rho}] \hspace{0.4cm} and \hspace{0.4cm} g=   \dfrac{1}{\delta} \left( \chi+ c \Psi_{A\chi,\rho}\right).
		 						   	\end{equation*}
		 						   		Moreover if $S$ is a topological semigroup and $f,g \in C(S)$, then $ \chi, \chi\circ\sigma \in C(S), A \in C(S \setminus  I_{\chi})$ and $\rho \in C(P_{\chi})$.
		 						   	 \end{thm}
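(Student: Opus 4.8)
The plan is to stratify the analysis according to whether $f$ and $g$ are linearly dependent, and, in the independent case, to reduce (\ref{6}) to the sine subtraction law (\ref{ast}), which is already classified in \cite[Theorem~4.2]{vb}. If $f=0$ we are in case (1). If $f\neq0$ while $f$ and $g$ are linearly dependent, write $g=\kappa f$; inserting this in (\ref{6}) gives $f(x\sigma(y)z_0)=\kappa f(x)f(y)-\kappa f(y)f(x)=0$ for all $x,y$, and since $\sigma$ is surjective this forces $f(S^2z_0)=0$; as $f\neq0$ we must have $S\neq S^2z_0$, which is case (2). From now on assume $f$ and $g$ are linearly independent. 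Then Lemmas~\ref{uu} and~\ref{r10} give $f(z_0)=0$ and $g(z_0)\neq0$, and Lemma~\ref{r8} supplies the identity (\ref{08}). Write $p:=g(\sigma(z_0)z_0)$ and $q:=f(\sigma(z_0)z_0)$.

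The core of the argument is the reduction, which splits into the two cases isolated by Lemma~\ref{r10}(ii). If $p\neq0$, set $F:=pf-qg$; then $F\neq0$ and $F,g$ are linearly independent (because $p\neq0$ and $f,g$ are independent). Rearranging (\ref{08}) gives $F(x\sigma(y))=g(z_0)\bigl(f(x)g(y)-f(y)g(x)\bigr)$, and substituting $f=p^{-1}(F+qg)$ into the right-hand side makes the $q$-terms cancel, so that $F(x\sigma(y))=\frac{g(z_0)}{p}\bigl(F(x)g(y)-F(y)g(x)\bigr)$; hence $\bigl(F,\tfrac{g(z_0)}{p}g\bigr)$ is a linearly independent solution of (\ref{ast}). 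If instead $p=0$, then $q\neq0$ by Lemma~\ref{r10}(ii), and (\ref{08}) collapses to $g(x\sigma(y))=g(x)\bigl(\tfrac{g(z_0)}{q}f\bigr)(y)-g(y)\bigl(\tfrac{g(z_0)}{q}f\bigr)(x)$, so $\bigl(g,\tfrac{g(z_0)}{q}f\bigr)$ is a linearly independent solution of (\ref{ast}).

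Now apply \cite[Theorem~4.2]{vb} (with the exponential $m$ and the function $\phi_m$ renamed $\chi$ and $\Psi_{A\chi,\rho}$) to the pair just produced. It yields an exponential $\chi$ on $S$ and the dichotomy: either $\chi\circ\sigma\neq\chi$, the ``cosine'' member of the pair being $\frac{\chi+\chi\circ\sigma}{2}$ up to an antisymmetric multiple of $\chi-\chi\circ\sigma$ and the ``sine'' member a multiple of $\chi-\chi\circ\sigma$; or $\chi\circ\sigma=\chi$, the cosine member being $\chi$ up to a multiple of $\Psi_{A\chi,\rho}$ (with $A$ a nonzero additive function on $S\setminus I_\chi$, $A\circ\sigma=-A$, $\rho\circ\sigma=-\rho$) and the sine member a multiple of $\Psi_{A\chi,\rho}$. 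Undoing the substitution $F=pf-qg$ (resp. reading off $g$ and $\tfrac{g(z_0)}{q}f$) expresses $f$ and $g$ as linear combinations of $\{\chi,\chi\circ\sigma\}$ or of $\{\chi,\Psi_{A\chi,\rho}\}$. Sorting the outcomes: $p=0$ gives case (3) when $\chi\circ\sigma\neq\chi$ and case (6) when $\chi\circ\sigma=\chi$; $p\neq0$ with $\chi\circ\sigma\neq\chi$ splits according to whether $q=0$ (then $f$ has no $\chi+\chi\circ\sigma$-component: case (4)) or $q\neq0$ (case (5)); and $p\neq0$ with $\chi\circ\sigma=\chi$ gives case (7).

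It remains to determine the constants, and this is the real work: passing to (\ref{ast}) discards the information carried by $z_0$, so the forms just obtained must be re-inserted into the \emph{original} equation (\ref{6}). Expanding $f(x\sigma(y)z_0)=f(x)g(y)-f(y)g(x)$ with the candidate $f,g$ and using the linear independence of $\{\chi,\chi\circ\sigma\}$, resp. of $\{\chi,\Psi_{A\chi,\rho}\}$ (by Lemma~\ref{ind} together with Notation~\ref{not}), forces the displayed algebraic relations among $\gamma,b,c,\alpha,\delta,\beta$ and, decisively, the prescribed values of $\chi(z_0)$, $\chi\circ\sigma(z_0)$, $A(z_0)$; the side conditions ($c\neq\pm1$ in (3), $\chi(z_0)=\chi\circ\sigma(z_0)$ in (4), $\alpha\neq\pm(2b\delta+\alpha c)$ in (5), $\alpha c+\delta\neq0$ in (7), and so on) are exactly those under which the normalizations make sense and $f,g$ stay independent. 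A routine substitution establishes the converse, that every pair in (1)--(7) solves (\ref{6}). Finally, when $S$ is a topological semigroup and $f,g\in C(S)$, the auxiliary functions $F$ and $\tfrac{g(z_0)}{p}g$ (resp. $g$ and $\tfrac{g(z_0)}{q}f$) are continuous, so the continuity part of \cite[Theorem~4.2]{vb} (and of \cite[Theorem~3.1]{d}) yields $\chi,\chi\circ\sigma\in C(S)$, $A\in C(S\setminus I_\chi)$ and $\rho\in C(P_\chi)$. I expect the main obstacle to lie not in the reduction but in this closing bookkeeping: verifying that the alternatives ``$p=0$ or not'' and ``$q=0$ or not'' exhaust the possibilities with no overlap, and that the constant identities and non-degeneracy conditions come out precisely as listed.
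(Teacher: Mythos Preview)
Your proposal is correct and follows essentially the same strategy as the paper: use Lemmas~\ref{uu}--\ref{r10} and the key identity (\ref{08}) to reduce to the sine subtraction law, invoke \cite[Theorem~4.2]{vb}, and then substitute back into (\ref{6}) to pin down the constants; your unified substitution $F=pf-qg$ when $p\neq0$ is a mild streamlining of the paper's separate subcases $q=0$ and $q\neq0$, but otherwise the arguments coincide. The one ingredient the paper makes explicit and your sketch does not is, in the $\Psi_{A\chi,\rho}$ cases, a verification that $z_0\in S\setminus I_\chi$ (ruling out $z_0\in I_\chi\setminus P_\chi$ and $z_0\in P_\chi$ separately), without which you cannot expand $\Psi_{A\chi,\rho}(x\sigma(y)z_0)$ as $A(x\sigma(y)z_0)\chi(x\sigma(y)z_0)$ or appeal to Lemma~\ref{ind}(b); be sure to include this when you carry out the ``bookkeeping'' you flag at the end.
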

		 							 \begin{proof}
		 							 	 If $f=0$ then $g$ can be chosen arbitrary, so this solution occurs in part (1). For the rest of the proof we assume that  $f\neq0$.
		 							 	 If $f$ and $g$ are linearly dependent, then there exists a constant $\kappa \in \mathbb{C}$ such that $g=\kappa f$ and then Eq. (\ref{6}) reduces to $f(x\sigma(y)z_0)=0$ for all $x,y \in S$. Then we must have $S \neq S^2z_0$ since $f \neq0$ by assumption. This proves the solution family (2). From now on we  may assume that $f$ and $g$ are linearly independent. Now by Lemma \ref{r10} (i) we have $g(z_0)\neq0$.
		 							 	 	
		 							 	 \textbf{Case 1}: Suppose $g(\sigma(z_0)z_0 )=0$, then by Lemma \ref{r10} (ii) we get  $f(\sigma(z_0)z_0 )\neq0$ and therefore Eq.(\ref{08}) can be rewritten as
		 							 	 \begin{equation}
		 							 	 \label{r88}
		 							 	 g(x\sigma(y))= \gamma g(x)f(y)- \gamma f(x)g(y),  \; x,y \in S,
		 							 	 \end{equation}
		 							 	 with $\gamma:=\dfrac{g(z_0)}{f(\sigma(z_0)z_0 )} \neq0$. From  (\ref{r88}) we read that the pair $(g, \gamma f)$ satisfies the sine subtraction law (\ref{7}), so according to \cite[Theorem 4.2]{vb}  and taking into account that $f$ and $g$ are linearly independent,  we infer that we have only  the following possibilities
		 							 	
		 							 	 	(i)  There exist constants $b\in \mathbb{C}^{*}, c \in \mathbb{C}$ and  an exponential function $\chi $ on $S$, with $\chi \circ \sigma \neq \chi$  such that
		 							 	 	\begin{equation*}
		 							 	 	  g= b(\chi-\chi\circ\sigma) \;\;and \;\; \gamma f=\dfrac{\chi +\chi\circ\sigma}{2 }+c  \dfrac{\chi-\chi\circ\sigma}{2 }.
		 							 	 	\end{equation*}
		 							 	 	By using (\ref{6}) we get  after some calculation that
		 							 	 	\begin{equation*}
		 							 	 \left( 	(1+c)\chi(z_0)+2 b\right) \chi(xy)+\left( (1-c )\chi\circ\sigma(z_0)-2 b\right) \chi\circ \sigma(xy)=0,\; x, y \in S,
		 							 	 	\end{equation*}
		 							 	 	which gives by Lemma \ref{ind} (a) that
		 							 	 		\begin{equation}
		 							 	 		\label{rrr}
		 							 	 	  	(1+c)\chi(z_0)=-2 b\;\;\;  and\; \;\;  	(1-c )\chi\circ\sigma(z_0)=2 b.  \end{equation}
		 							 	 	  	As $b \neq0$ we get from (\ref{rrr}) that $c \neq \pm1$. So
		 							 	 	  		\begin{equation*}
		 							 	 	  	 \chi(z_0)=\dfrac{ -2b}{1+c }\;\;\;  and \;\;\;    \chi\circ\sigma(z_0)=\dfrac{ 2b}{1-c },  \end{equation*}
		 							 	 	  	and then we obtain solution family (3).
		 							 	 	  	
		 							 	 	  	 (ii) There exist a  constant $c\in \mathbb{C}$, an exponential function $\chi $ on $S$, an
		 							 	 	  	 additive
		 							 	 	  	 function $A: S \setminus  I_{\chi}\longrightarrow \mathbb{C}$ and a function $\rho: P_{\chi}\longrightarrow \mathbb{C}$ such that
		 							 	 	  	 \begin{equation*}
		 							 	 	  	   g=   \Psi_{A\chi,\rho}\hspace{0.4cm} and \hspace{0.4cm}  \gamma f= \chi +c\Psi_{A\chi,\rho},
		 							 	 	  	 \end{equation*}
		 							 	 	with $\Psi_{A\chi,\rho}\neq0$,  $\chi \circ \sigma = \chi$, $A\circ \sigma = -A$ and $\rho\circ\sigma=-\rho$. Now,  we explore the location of $z_0$. Since  $0\neq g(z_0)=\Psi_{A\chi,\rho}(z_0)$, we have    $ z_0 \in P_{\chi}\cup (S \setminus I_{\chi} )$  because $\Psi_{A\chi,\rho}=0$ on $I_{\chi}\setminus  P_{\chi}$. If  $ z_0 \in P_{\chi}$, then
		 							 	 	 $$0\neq g(z_0)=\Psi_{A\chi,\rho}(z_0)= \rho(z_0)  \; \; and $$
		 							 	 	  $$ 0=\gamma f(z_0)=\chi(z_0)+c \Psi_{A\chi,\rho}(z_0)=c \rho(z_0),$$
		 							 	 	 since $\chi(z_0)=0$, and $f(z_0)=0$ by Lemma \ref{uu}.  Then from two last identities we  get   $c=0$.   Therefore $$\gamma f(\sigma(z_0)z_0)=\chi(\sigma(z_0)z_0)=\chi(\sigma(z_0)z_0)=\chi( z_0)^2=0,$$ which contradicts that $f(\sigma(z_0)z_0)\neq0$ in the present case.
		 							 	 	  Thus, we conclude that  $ z_0 \in   S \setminus I_{\chi} $,
		 							 	 	and by  a  small computations based on (\ref{6}),   we  can check easily that  for all $x,y \in  S \setminus I_{\chi} $ we have
		 							 	 	\begin{equation}
		 							 	 	\label{r25}
		 							 	 	\left(1+c A(z_0) \right)\chi(z_0)\chi(xy)+\left(c\chi(z_0)+ 1 \right) \chi(xy)A(xy) =0.
		 							 	 	\end{equation}
		 							 	 	If $A=0$ then we get
		 							 	   $\chi(z_0)\chi(xy)=0$ which contradicts that $\chi(z_0)\neq0$ and $\chi(xy)\neq0$. So $A\neq0$ and from (\ref{r25}) we derive by Lemma \ref{ind} (b) that
		 							 	   $$ \left(1+c A(z_0) \right)\chi(z_0)=0\; \; and\; \;  c\chi(z_0)+ 1 =0,      $$
		 							 	    from which we  read  that $c\neq0$ because $c \chi(z_0)=-1\neq0$. Then we  deduce that $A(z_0)=\chi(z_0)=-1/c$. So, by writing $\gamma$ instead of $\dfrac{1}{\gamma}$, we obtain the solution in part (6).
		 							 	 	
		 							 	 	\textbf{Case 2}: Suppose $g(\sigma(z_0)z_0 )\neq0$.\\
		 							 	 	 \underline{Subcase 2.1}. $f(\sigma(z_0)z_0 )=0$. Then (\ref{08}) reduces to
		 							 	 	 \begin{equation}
		 							 	 	 \label{rtt}
		 							 	 	 f(x\sigma(y))= \beta f(x)g(y)-\beta f(y)g(x),  \; x,y \in S,
		 							 	 	 \end{equation}
		 							 	 	 with $\beta:=\dfrac{g(z_0)}{g(\sigma(z_0)z_0 )} \neq0$. From  (\ref{rtt}) we read that the pair $(f, \beta g)$ satisfies the sine subtraction law (\ref{7}), so according to \cite[Theorem 4.2]{vb}  and taking into account that $f$ and $g$ are linearly independent,  we infer that we have only  the following possibilities :
		 							 	 	
		 							 	 	 (i)  There exist constants $b\in \mathbb{C}^{*}, c \in \mathbb{C}$ and  an exponential function $\chi $ on $S$ with $\chi \circ \sigma \neq \chi$   such that
		 							 	 	 \begin{equation*}
		 							 	 f= b(\chi-\chi\circ\sigma) \hspace{0.4cm}and \hspace{0.4cm} \beta g=\dfrac{\chi +\chi\circ\sigma}{2 }+c  \dfrac{\chi-\chi\circ\sigma}{2 }.
		 							 	 	 \end{equation*}
		 							 	 	 By using (\ref{6}) we get  after simplification that
		 							 	 	 \begin{equation*}
		 							 	 	 \left( \beta\chi\circ \sigma(z_0)-1\right) \chi(xy)+\left( 1-	 \beta\chi (z_0) \right) \chi\circ \sigma(xy)=0,\; x,y \in S,
		 							 	 	 \end{equation*}
		 							 	 	 which implies, by Lemma \ref{ind} (a), that
		 							 	 	 \begin{equation*}
		 							 	 	  \beta\chi\circ \sigma(z_0)-1=0\hspace{0.3cm}  and \hspace{0.3cm}  1-	 \beta\chi (z_0) =0.  \end{equation*}
		 							 	 	This gives $\chi\circ \sigma(z_0)= \chi(z_0)=1/\beta $. Then we are   in solution family (4).
		 							 	 	
		 							 	 	 (ii) There exist a  constant $c\in \mathbb{C}$, an exponential function $\chi $ on $S$, an additive function $A: S \setminus  I_{\chi}\longrightarrow \mathbb{C}$ and a function $\rho: P_{\chi}\longrightarrow \mathbb{C}$ such that
		 							 	 	 \begin{equation*}
		 							 	 	f=   \Psi_{A\chi,\rho}\hspace{0.4cm} and \hspace{0.4cm}  \beta g=\chi +c \Psi_{A\chi,\rho},
		 							 	 	 \end{equation*}
		 							 	 	 with $\Psi_{A\chi,\rho}\neq0$,  $\chi \circ \sigma = \chi$, $A\circ \sigma = -A$ and $\rho\circ\sigma=-\rho$.  Furthermore   $z_0 \in S\setminus I_{\chi} $. Indeed, by Lemma \ref{uu} we have that $0=f(z_0)=\Psi_{A\chi,\rho}(z_0)$,  and  by Lemma \ref{r10}(i) we get that $0\neq \beta g(z_0)=\chi(z_0)+c\Psi_{A\chi,\rho}(z_0)=\chi(z_0)$. By a  small computation based on (\ref{6}),   we have
		 							 	 	 \begin{equation}
		 							 	 	 \label{05}
		 							 	 	    A(z_0)\chi(z_0)  \chi(xy)+\left( \chi(z_0)- \dfrac{1}{\beta} \right)A(xy) \chi(xy) =0
		 							 	 	 \end{equation}
for all $x,y \in S \setminus I_{\chi}$.\\
If $A=0,$ then  $f= 0$ on $S\setminus I_{\chi}$ and necessarily we have $\rho \neq0 $ because $f\neq0$. Next, let  $x\in P_{\chi}$  and $y\in S\setminus I_{\chi}$, then $x\sigma(y)z_0 \in P_{\chi}$  by    Lemma \ref{eb}. Now by  condition (iI) of \cite[Theorem 3.1 (B)]{d} we have
		 							 	 	\begin{equation*}
		 							 	 	\rho(x\sigma(y)z_0)= \rho(x)\chi(\sigma(y)z_0)=\rho(x)\chi(y)\chi(z_0),
		 							 	 	\end{equation*}
		 							 	 and therefore	by using (\ref{6})   we get that
		 							 	 	\begin{eqnarray*}
		 							 	 		\rho(x)\chi(y)\chi( z_0)=\rho(x\sigma(y)z_0) = f(x\sigma(y)z_0) &=& f(x) g(y)-f(y)g(x)
		 							 	 	 \\
		 							 	 	&=& \dfrac{1   }{\beta}\chi(y)\rho(x).
		 							 	 	\end{eqnarray*}
		 							 	 	This implies that
		 							 	     $\chi(z_0)=1/\beta $ because $ \rho\neq0$ and $\chi(y)\neq0$,	
		 							 	 so we have  the special case of  the solution family (7) corresponding to $\alpha=0$.
		 							 	  If $A\neq0
		 							 	 	$ then from (\ref{05}) we get by Lemma \ref{ind} (b) that
		 							 	 	$$   A(z_0)\chi(z_0) =0  \; \; and \; \;  \chi(z_0)- \dfrac{1}{\beta}   =0.$$
		 							 	 	Hence
		 							 	 	 $\chi(z_0)=1/\beta $ and $A(z_0)=0$. So, again we obtain the special case of solution family (7) corresponding to $\alpha=0$.\\
		 							 	 	 \underline{Subcase 2.2}: If $f(\sigma(z_0)z_0 )\neq0$  then Eq. (\ref{08}) can be rewritten as follows
		 							 	 	 	\begin{equation}
		 							 	 	 	\label{rvv}
		 							 	 	 	 f(x\sigma(y))= \delta f(x)g(y)-\delta f(y)g(x)+\alpha   g(x\sigma(y)),  \; x,y \in S,
		 							 	 	 	\end{equation}
		 							 	 	 where $$\delta:=\dfrac{g(z_0)}{g(\sigma(z_0)z_0 )} \neq0 \;\; and \;\; \alpha:=\dfrac{f(\sigma(z_0)z_0)}{g(\sigma(z_0)z_0 )} \neq0.$$
		 							 	 	 Now, we can reformulate the form of Eq. (\ref{rvv})  as follows
		 							 	 	 $$
		 							 	 	 \left(  f-\alpha g\right)(x\sigma(y)))=\delta \left(  f-\alpha g\right)(x)g(y)-\delta  \left(  f-\alpha g\right)(y) g(x),\; x, y \in S
		 							 	 	$$
		 							 	 	where $ x,y \in S$.
		 						From  the last equation we read that the pair $ \left(   f-\alpha g, \delta g\right) $ satisfies the sine subtraction law (\ref{7}).  So, in view of \cite[Theorem 4.2]{vb} and using the assumption that $\{f,g\}$ is linearly independent, we infer that we have the following possibilities :
		 						
		 						 (i)  There exist constants $b\in \mathbb{C}^{*}, c \in \mathbb{C}$ and  an exponential function $\chi $ on $S$, with $\chi \circ \sigma \neq \chi$ such that
		 						 \begin{equation*}
		 						 f-\alpha g=b(\chi -\chi\circ \sigma)\;\;\; and \;\;\; \delta g= \dfrac{\chi +\chi\circ \sigma}{2}+c\dfrac{\chi -\chi\circ \sigma}{2},
		 						 \end{equation*}
		 						 which implies that
		 						 \begin{equation*}
		 						  f= \alpha   \dfrac{\chi+\chi\circ\sigma}{2\delta }+  (\alpha c+2b\delta) \dfrac{\chi-\chi\circ\sigma}{2\delta }   \hspace{0.4cm}and \hspace{0.4cm}    g=\dfrac{\chi+\chi\circ\sigma}{2 \delta}+c  \dfrac{\chi-\chi\circ\sigma}{2\delta }.
		 						 \end{equation*}
		 						 By applying (\ref{6}) to the pair $(x,\sigma(y))$ we get  after some computations that
		 						 \begin{equation*}
		 						 \left( 2 b-(\alpha+\alpha c +2b \delta)\chi (z_0) \right) \chi(xy)-\left( 	2 b+(\alpha-\alpha c -2b \delta) \chi(\sigma(z_0))\right) \chi\circ \sigma(xy)=0,
		 						 \end{equation*}
		 						for all $x,y \in S$, which implies by Lemma \ref{ind} (a) that
		 						 \begin{equation}
		 						 \label{n1}
		 						 	2 b-(\alpha+\alpha c +2b \delta)\chi (z_0) =0\hspace{0.3cm}  and \hspace{0.3cm} 		2 b+(\alpha-\alpha c -2b \delta) \chi\circ \sigma(z_0)=0.  \end{equation}
		 						As $b \neq0$, we deduce from the identities above that $ \alpha \neq\pm (2b \delta +\alpha c)$ and then
		 					\begin{equation*}	
		 					\chi (z_0)=	\dfrac {2b }{\alpha (1+c)+2b\delta   }\hspace{0.3cm} and \hspace{0.3cm}	 \chi\circ \sigma(z_0)=\dfrac {2b }{\alpha (c-1)+2b\delta   }.
		 					\end{equation*}
		 						 This gives the  solution in family (5).
		 						
		 							 (ii) There exist a  constant $c\in \mathbb{C}$, an exponential function $\chi $ on $S$, an additive function $A: S \setminus  I_{\chi}\longrightarrow \mathbb{C}$ and a function $\rho: P_{\chi}\longrightarrow \mathbb{C}$ such that
		 							 \begin{equation*}
		 							   f-\alpha g= \Psi_{A\chi,\rho}\;\;\; and \;\;\;   \delta g=\chi+c   \Psi_{A\chi,\rho}
		 							 \end{equation*}
		 							 with $\chi \circ \sigma = \chi$, $A\circ \sigma = -A$ and $\rho\circ\sigma=-\rho$.
		 							 Then the forms of $f$ and $g$  become  respectively \begin{equation*}
		 							 f =\dfrac{1}{\delta}[ \alpha\chi+ (\alpha c+\delta)\Psi_{A\chi,\rho}]\;\;\; and \;\;\;    g=\dfrac{1}{\delta}(\chi+c   \Psi_{A\chi,\rho}).
		 							 \end{equation*}
		 							  Moreover $z_0 \in S\setminus I_{\chi}$. Indeed, if $z_0\in I_{\chi}\setminus P_{\chi}$ then we have $$    g(z_0)=\dfrac{1}{\delta}(  \chi(z_0)+c\Psi_{A\chi,\rho}(z_0))=0,$$
		 							  since $\chi(z_0)=0$ and $\Psi_{A\chi,\rho}=0$ on  $ I_{\chi}\setminus P_{\chi}$.
		 							   This is a contradiction because $g(z_0)\neq0$  by assumption. If $z_0 \in P_{\chi} $
		 							 then
		 							\begin{equation}
		 							\label{55} 0\neq \dfrac{1}{\delta}(  \chi(z_0)+c\Psi_{A\chi,\rho}(z_0))= \dfrac{c}{\delta}\rho(z_0)    \; \; and \end{equation}
		 							 $$0= f(z_0)=  \dfrac{1}{\delta}[\alpha \chi(z_0)+( \alpha c+\delta)\Psi_{A\chi,\rho}(z_0)]= \dfrac{1}{\delta}  { (\alpha c +\delta)} \rho(z_0),$$
		 							 since $\chi(z_0)=0 $ and $f(z_0)=0$ by Lemma \ref{uu}.  This gives $ \alpha c +\delta=0$ because  $\rho(z_0)\neq0$ by  (\ref{55}).   Therefore  by using this we obtain    $$f(\sigma(z_0)z_0)=\dfrac{1}{\delta}[\alpha\chi(\sigma(z_0)z_0)+( \alpha c+\delta)\Psi_{A\chi,\rho}(\sigma(z_0)z_0)]= \dfrac{\alpha}{\delta}\chi( z_0)^2=0,$$ but  this contradicts the   assumption that $f(\sigma(z_0)z_0)\neq0$ in the present subcase.
		 							   Now for all $x,y\in  S \setminus I_{\chi}$ we have  $xy z_0 \in  S \setminus I_{\chi}$, and then by applying (\ref{6}) to the pair $(x,\sigma(y))$, a  small computation shows that
		 							 \begin{equation}
		 							 \label{r23}
		 							\left( \alpha \chi(z_0)+ (\alpha c+\delta)A(z_0)\chi(z_0) \right) \chi(xy)-\left((\alpha c +\delta)\chi(z_0)  - 1 \right)\chi(xy) A(x y) =0
		 							 \end{equation}
for all $x,y \in S \setminus I_{\chi}$.\\
		 							   If   $A=0$  we get from (\ref{r23}) that $\chi(z_0)\chi(xy)=0$, which is a contradiction because $\chi(z_0) \neq0$. So $A\neq 0$  and therefore from (\ref{r23}) and
		 							  by using Lemma \ref{ind} (b)   we get
		 							 \begin{equation*}
		 							 \label{xx}
		 							\alpha \chi(z_0)+ (\alpha c+\delta)A(z_0)\chi(z_0) =0\;\; and \;\; (\alpha c +\delta)\chi(z_0)  - 1=0,
		 							  \end{equation*}
		 							 from which  we read that  $\alpha c \neq  -\delta$ (because otherwise we get $\alpha =0 $ and $1=0)$, so we conclude that
		 							   $$\chi(z_0)=\dfrac{1}{\alpha c+\delta} \;\;\;  and \; \;\; A(z_0)=-\dfrac{\alpha}{\alpha c+\delta }.$$  This proves the forms in family (7).
		 							
		 							   	Conversely,  simple computations prove that the formulas above for      $f$ and $g$   define  solutions of (\ref{6}).
		 							   	
		 							  Finally,  suppose that $S$ is topological semigroup and $f, g \in C(S)$. The  continuity of cases (1) and (2) are evident.  The continuity  of $\chi$ and $\chi \circ \sigma$ in case (3)  follows directly from the form of $g$ by applying \cite[Theorem 3.18]{g} since $c_1\neq0$ and  $\chi\neq \chi \circ \sigma$. The case (4)   can be treated as the case (3). For the case (5) we have
		 							  \begin{equation*}
		 							  f-\alpha g= b (\chi -\chi\circ \sigma).
		 							  \end{equation*}
		 					 Then the continuity of $\chi$ and $\chi \circ \sigma$ follows from \cite[Theorem 3.18]{g} because $ b\neq0$ and $\chi\neq \chi \circ \sigma$. For the case (6)  we have $g=\Psi_{A\chi,\rho}$ is  continuous, then $\rho \in C(P_{\chi})$ and $A\chi \in C(S\setminus I_{\chi})$    by restriction. As $\alpha(f-cg)=\chi$ we get $\chi \in C(S)$, so $A\in C(S\setminus I_{\chi})$ since $\chi(x) \neq0$ for all  $x\in S\setminus I_{\chi}$. The parts (7) and (8) can be treated as case (6). This completes the proof  of Theorem  \ref{t1}.
		 							 \end{proof}
		 							 \section{Solution of the functional equation (\ref{2})}
		 							 	 In this section we solve on semigroups  the functional equation (\ref{2}), by using the results obtained in Theorem \ref{t1}.
		 							  \begin{thm}
		 							  	Let  $\lambda\in \mathbb{C}^{*}$. The solutions $f,g :S\longrightarrow \mathbb{C}$ of the  Kannappan-sine subtraction law (\ref{2}) can be listed as follows.\\
		 							  	(1) $f=\lambda g$ and $g$ arbitrary.\\
		 							  	(2)  $S\neq S^2z_0$, $f=0$    and $g$ is any non-zero function   such that $g(S^2z_0)=0$.\\
		 							  	(3)   $S\neq S^2z_0$,   $f=\dfrac{1+\kappa\lambda}{ \kappa}g$, where  $g$ is any non-zero function  such that $g(S^2z_0)=0$, and $\kappa\in\mathbb{C^{*}}$ is a constant.\\
		 							  	(4) There exist  constants $\gamma, b \in \mathbb{C}^{*}, c  \in \mathbb{C} \setminus \{\pm 1\} $ and an  exponential function $\chi$ on
		 							  	$S$  with $\chi \neq \chi\circ\sigma$, $\chi(z_0)=\dfrac{-2b}{1+c}$ and $\chi\circ\sigma(z_0)=\dfrac{2b}{1-c}$ such that
		 							  	\begin{equation*}
		 							  	f=   \dfrac{\chi+\chi\circ\sigma}{2\gamma} +(2\gamma  \lambda b+c)\dfrac{\chi-\chi\circ\sigma}{2\gamma}     \hspace{0.4cm} and \hspace{0.4cm}  g=b(  \chi-\chi\circ\sigma).
		 							  	\end{equation*}
		 							  	(5) There exist  constants $\beta, b \in \mathbb{C}^{*},c  \in \mathbb{C}   $ and an exponential function $\chi$ on
		 							  	$S$ with
		 							  	$\chi \neq \chi\circ\sigma$, $\chi(z_0)=  \chi\circ\sigma(z_0)=1/\beta$ such that
		 							  	\begin{equation*}
		 							  f=   \lambda\dfrac{\chi+\chi\circ\sigma}{2\beta} +(2\beta b+\lambda c  )\dfrac{\chi-\chi\circ\sigma}{2\beta}  \hspace{0.3cm} and \hspace{0.3cm}  g=  \dfrac{\chi+\chi\circ\sigma}{2\beta}+ c\dfrac{\chi-\chi\circ\sigma}{2\beta}.
		 							  	\end{equation*}
		 							  	(6) There exist  constants $\alpha,\delta, b  \in \mathbb{C}^{*}, c \in \mathbb{C}  $    and an exponential  function
		 							  	$\chi$  on	$S$
		 							  	with  $ \alpha \neq\pm (2b \delta +\alpha c)$,  $\chi \neq \chi\circ\sigma$, $	\chi (z_0)=	\dfrac {2b }{\alpha (1+c)+2b\delta   } \;\; and \;\; 	 \chi\circ \sigma(z_0)=\dfrac {2b }{\alpha (c-1)+2b\delta   }$
		 							  	such that
		 							  	$$
		 							  f=   (\alpha+ \lambda)\dfrac{\chi+\chi\circ\sigma}{2\delta} +[c(\alpha+\lambda)+2b\beta]\dfrac{\chi-\chi\circ\sigma}{2\delta}$$  and $$ g=   \dfrac{\chi+\chi\circ\sigma}{2\delta} +c  \dfrac{\chi-\chi\circ\sigma}{2\delta}.
		 							  	$$
		 							  	(7) There exist  constants $\gamma,  c\in \mathbb{C}^{*} $, an exponential function $\chi$ on $S$,  a non-zero
		 							  	additive
		 							  	function $A: S \setminus  I_{\chi}\longrightarrow \mathbb{C}$ and a function $\rho: P_{\chi}\longrightarrow \mathbb{C}$ with $\alpha c+\delta\neq0$, $\chi\circ\sigma=\chi, A \circ\sigma=$
		 							  	$-A, \rho\circ\sigma=-\rho $ and
		 							  	$\chi(z_0)=A(z_0)=\dfrac{-1}{c}$ such that
		 							  	\begin{equation*}
		 							  	f= \gamma\chi+ (\gamma c+\lambda) \Psi_{A\chi,\rho} \hspace{0.4cm} and \hspace{0.4cm} g= \Psi_{A\chi,\rho}.
		 							  	\end{equation*}
		 							  	(8) There exist  constants $ \alpha,  c \in \mathbb{C}$ and $ \delta \in \mathbb{C}^{*}$, an exponential function $\chi$ on $S$,
		 							  	a non-zero
		 							  	additive
		 							  	function $A: S \setminus  I_{\chi}\longrightarrow \mathbb{C}$ and a function $\rho: P_{\chi}\longrightarrow \mathbb{C}$ such that $\alpha c+\delta\neq0$,  $\chi\circ\sigma$
		 							  	$=\chi,  A \circ\sigma=-A,  \rho\circ\sigma=-\rho $,
		 							  	$\chi(z_0)=\dfrac{1}{\alpha c +\delta}$ and $A(z_0)=-\dfrac{\alpha}{\alpha c +\delta}$ such that
		 							  	\begin{equation*}
		 							  	f=  \dfrac{\alpha+\lambda}{\delta}\chi+ \dfrac{c(\lambda+\alpha)+\delta)}{\delta} \Psi_{A\chi,\rho}      \hspace{0.4cm} and \hspace{0.4cm} g=   \dfrac{1}{\delta} \left( \chi+ c \Psi_{A\chi,\rho}\right).
		 							  	\end{equation*}
		 							  	Moreover if $S$ is a topological semigroup and $f,g \in C(S)$, then $ \chi, \chi\circ\sigma \in C(S), A \in C(S \setminus  I_{\chi})$ and $\rho \in C(P_{\chi})$.
		 							  \end{thm}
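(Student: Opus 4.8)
The plan is to reduce equation (\ref{2}) to the Kannappan-sine subtraction law (\ref{6}), whose solutions were determined in Theorem \ref{t1}, by means of the substitution $F:=f-\lambda g$. The starting observation is that for arbitrary functions $f,g:S\longrightarrow\mathbb{C}$, writing $f=F+\lambda g$ one has
$$f(x)g(y)-f(y)g(x)=\big(F(x)+\lambda g(x)\big)g(y)-\big(F(y)+\lambda g(y)\big)g(x)=F(x)g(y)-F(y)g(x),$$
since the terms $\lambda g(x)g(y)$ cancel. Consequently $(f,g)$ solves (\ref{2}) if and only if
$$F(x\sigma(y)z_0)=f(x\sigma(y)z_0)-\lambda g(x\sigma(y)z_0)=F(x)g(y)-F(y)g(x),\quad x,y\in S,$$
that is, if and only if $(F,g)$ solves (\ref{6}). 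Thus $(f,g)\mapsto (f-\lambda g,g)$ is a bijection from the solution set of (\ref{2}) onto that of (\ref{6}), with inverse $(F,g)\mapsto (F+\lambda g,g)$.

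Next I would apply Theorem \ref{t1} to the pair $(F,g)$ and read off the resulting shape of $f=F+\lambda g$ family by family. For the degenerate possibilities: $F=0$ gives $f=\lambda g$ with $g$ arbitrary, which is family (1); and if $F\neq0$ is linearly dependent on $g$, then Theorem \ref{t1}(2) forces $S\neq S^2z_0$, $F(S^2z_0)=0$ and $g=\kappa F$, so that $f=(1+\lambda\kappa)F$ and $g=\kappa F$ — eliminating $F$ and distinguishing whether $\kappa=0$ and whether $1+\lambda\kappa=0$ produces families (2) and (3). For the non-degenerate families (3)--(7) of Theorem \ref{t1}, the function $g$ (hence $\chi$, $\chi\circ\sigma$, $A$, $\rho$, and all conditions imposed on them and on $z_0$) is unchanged, while $f$ is replaced by $F+\lambda g$; in each of these $g$ is a prescribed linear combination of $\chi$ and $\chi\circ\sigma$ (or of $\chi$ and $\Psi_{A\chi,\rho}$), so $F+\lambda g$ is again such a combination and one only has to regroup coefficients. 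This yields families (4)--(8) with the displayed parameters.

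For the converse I would start from an $(f,g)$ of one of the forms (1)--(8), put $F:=f-\lambda g$, check by a direct computation that $(F,g)$ has the corresponding form in Theorem \ref{t1} — hence solves (\ref{6}) — and conclude that $(f,g)$ solves (\ref{2}). Finally, for the topological addendum: if $f,g\in C(S)$ then $F=f-\lambda g\in C(S)$, so the continuity assertions of Theorem \ref{t1} applied to $(F,g)$ give at once $\chi,\chi\circ\sigma\in C(S)$, $A\in C(S\setminus I_\chi)$ and $\rho\in C(P_\chi)$.

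I do not anticipate a serious obstacle: the whole argument rests on the elementary cancellation identity above, and everything else is the bookkeeping of matching the two parameter lists. The one point requiring care is the linearly dependent family, where re-expressing $f=(1+\lambda\kappa)F$ through $g$ splits into the sub-cases $\kappa=0$, $1+\lambda\kappa=0$, and the generic case, and one must ensure that no solution is dropped or counted twice in that step.
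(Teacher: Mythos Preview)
Your proposal is correct and is essentially the paper's own proof: the paper also sets $F:=f-\lambda g$, observes that $(F,g)$ satisfies (\ref{6}), invokes Theorem \ref{t1}, and then rewrites $f=F+\lambda g$ case by case, with the same split of the linearly dependent family according to whether $\kappa=0$. Your remark about bookkeeping in the dependent case is exactly where the care is needed, and otherwise the argument is the same substitution-and-regrouping the paper carries out.
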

		 							  \begin{proof}
		 							   The functional equation (\ref{2}) can  be rewritten as \begin{equation*}
		 							   \label{add}
		 							  	\left(  f-\lambda g\right) (x\sigma(y)z_0)=\left(  f-\lambda g\right)(x)g(y)-\left( f-\lambda g\right)(y)g(x)
		 							  	, \;x,y \in S,
		 							   \end{equation*}
		 							    and from this      we read that the pair $ \left(  f-\lambda g,  g\right) $ satisfies Kannappan-sine subtraction law (\ref{6}).  So according to Theorem \ref{t1} we get the following possibilities: \\
		 							    (1) $ f-\lambda g    =0$ and $g$ arbitrary. This gives  $g = \lambda f$ and then we get the solution part (1).\\
		 							    (2) $S\neq S^2z_0 $, $\left(  f- \lambda g\right)$ is any non-zero function such that $\left(  f- \lambda g\right)(S^2z_0)=0$  and $g=\kappa\left(  f-\lambda g\right)$ for some $\kappa\in\mathbb{C}$. This gives \begin{equation}
		 							    \label{89}g=  f=0 \;on \;S^2z_0 \;\;    and  \;\; (1+\kappa \lambda)g= {\kappa} f.\end{equation}
		 							
		 							    If $\kappa=0$ then from (\ref{89}) we get $g=0$,    and then $f$ is any non-zero function   such that $f(S^2z_0)=0$. This proves solution family (2).
		 							
		 							     If $\kappa\neq0$ we obtain from  (\ref{89})    that $f=\dfrac{1+\kappa\lambda}{ \kappa}g$ with $g$ is any non-zero function  such that $g(S^2z_0)=0$. Then we get solution in part (3).
		 							    \\
		 							    (3) There exist  constants $\gamma, b \in \mathbb{C}^{*},c \in \mathbb{C} \setminus \{\pm 1\} $ and an exponential function $\chi$ on
		 							    $S$  with $\chi \neq \chi\circ\sigma$, $\chi(z_0)=\dfrac{-2b}{1+c}$ and $\chi\circ\sigma(z_0)=\dfrac{2b }{1-c }$ such that
		 							    \begin{equation*}
		 							     f-{\lambda}g=  \dfrac{\chi+\chi\circ\sigma}{2\gamma} +c \dfrac{\chi-\chi\circ\sigma}{2\gamma}   \hspace{0.6cm} and \hspace{0.6cm}  g=b( \chi-\chi\circ\sigma),
		 							    \end{equation*}
		 							    then we get
		 							   \begin{equation*}
		 							    f=    \dfrac{\chi+\chi\circ\sigma}{2\gamma} +c \dfrac{\chi-\chi\circ\sigma}{2\gamma}     +\lambda g =    \dfrac{\chi+\chi\circ\sigma}{2\gamma} +(2\gamma  \lambda b+c)\dfrac{\chi-\chi\circ\sigma}{2\gamma}  .
		 							   \end{equation*}
So we get the solution in part (4).\\
		 							 The cases (4)-(7) of Theorem  \ref{t1}  can  be treated   as case  (2) below, and in this context  we show   that we get the solutions in parts (5)-(9) respectively.
		 							  	Conversely,  simple computations prove that the formulas above for      $f$ and $g$   define  solutions of (\ref{2}).
		 							  The topological statement can be treated as in Theorem \ref{t1}.
		 							  \end{proof}
		 						
		\section{ An example}
		In this section we give an example to illustrate the results obtained in Theorem \ref{t1}.
		\begin{ex}
			 We solve the Kannappan-sine subtraction (\ref{6}) on a semigroup $S$, which is neither a  group nor a monoid.
			
			We consider the semigroup
			 $S:=\mathbb{N}\times\mathbb{N}$ equipped with addition as composition rule, and discrete topology. $S$ is  a commutative semigroup which satisfies $S\neq S^2z_0$ for some fixed element $z_0=(s_0, t_0) $ in $S$, where $s_0,t_0\in \mathbb{N}$. Here we have
			 \begin{eqnarray*}
			     S^2z_0&=&\{ (x_1,y_1)+(x_2,y_2)+(s_0,t_0) | x_1,x_2,y_1,y_2 \in \mathbb{N} \}\\
			     & =&\{ (n,m) | n\geq s_0+2, m\geq t_0+2\},
			 \end{eqnarray*}
			 	and then
			 	 \begin{equation*}
			   S\setminus	S^2z_0= (\{n\in \mathbb{N}| 1\leq n\leq 1+s_0\} \times \mathbb{N}) \cup (\mathbb{N} \times \{m\in \mathbb{N}| 1\leq m\leq 1+s_0\} ).
			 	   \end{equation*}
			  We define
			\begin{equation*}
			\sigma (x,y):=(y,x),  \; x,y \in \mathbb{N}.
			\end{equation*}
			We indicate here  the corresponding continuous exponentials and additive functions on $S$.
			From \cite[Example 8.3]{q} we get that the continuous exponentials  $\chi:S \longrightarrow \mathbb{C}$ are the functions having the form
			\begin{equation}
			\label{88}
			\chi_{a_1 ,a_2}(x,y)= a_1^xa_2^y,\;   \text{for all} \; x,y \in \mathbb{N},
			\end{equation}
			 where $a_1 ,a_2\in \mathbb{C}^{*}$, and
			the additive functions $A:S \longrightarrow \mathbb{C}$ are given by
				\begin{center}
					$A_{b_1 ,b_2}(x,y)= {b_1}x + {b_2} y$,\;   for all $x,y \in \mathbb{N}$,
				\end{center}
				where $b_1 ,b_2\in \mathbb{C}^{*}$.

			The condition $\chi_{a_1 ,a_2}\circ \sigma =\chi_{a_1 ,a_2}$ becomes $  a_1^xa_2^y=  a_1^ya_2^x$, which implies that $a_1=a_2$, and the condition $A_{b_1 ,b_2}\circ \sigma=-A_{b_1 ,b_2}$ reduces to ${b_1}x + {b_2} y=-({b_2}x + {b_1} y)$, which gives  ${b_2}=-{b_1}$.
				
				Since the exponential defined in (\ref{88}) vanishes nowhere  $(I_{\chi}= \emptyset)$, then the corresponding function $\Psi_{A\chi, \rho}$  reduces to $\Psi_{A\chi}=A_{b_1 ,-b_1}\chi_{a_1 ,a_1} $.
				
				For $s_{0}\neq t_{0}$, the constraint $\chi\circ \sigma(z_0)  =\chi (z_0)$, in  the solution family (4) of Theorem \ref{t1}, implies that $a_1=a_2$, which contradicts the fact that $\chi  \circ \sigma \neq\chi  $, so this solution family does not occur when $s_{0}\neq t_{0}$.
				
					Finally, by help of Theorem \ref{t1}, for $s_{0}\neq t_{0}$, the continuous solutions $f,g :\mathbb{N}\times \mathbb{N} \longrightarrow \mathbb{C}$ of the Kannappan-sine subtraction law (\ref{6}), namely
				\begin{equation*}
				f(x_1+y_2+s_0, x_2+y_1+t_0)=f(x_1,y_1)g(x_2,y_2)-f(x_2,y_2)g(x_1,y_1),
				\end{equation*}
			for all  $x_1,x_2,y_1,y_2 \in \mathbb{N}$,	are the following pairs of functions:\\
				(1) $f=0$ and $g$ arbitrary.\\
					(2) $f$ is any non-zero function such that $f(S^2z_0)=0$  and $g=c f$, where $c\in\mathbb{C}$ is a constant.
					\\
					(3) There exist  constants $\gamma, b,a_1,a_2 \in \mathbb{C}^{*},c \in \mathbb{C} \setminus \{\pm 1\} $ with $a_{1}\neq a_{2}$, $a_1^{s_0}a_2^{t_0}= \dfrac{ -2b}{1+c}$ and $a_2^{s_0}a_1^{t_0}=\dfrac{2b}{1-c}$ such that
					\begin{equation*}
					f(x_1,y_1)=  \dfrac{ a_1^{x_1}a_2^{y_1}+ a_1^{y_1}a_2^{x_1}}{2\gamma} +c \dfrac{a_1^{x_1}a_2^{y_1}- a_1^{y_1}a_2^{x_1}}{2\gamma}    \;\;and
					\end{equation*}
					\begin{equation*}
					  g(x_1,y_1)=b(a_1^{x_1}a_2^{y_1}- a_1^{y_1}a_2^{x_1} ).
					\end{equation*}
					(4) There exist  constants $\alpha,\delta, b, a_1,a_2 \in \mathbb{C}^{*}, c\in \mathbb{C}  $
					with $a_{1}\neq a_{2}$, $\alpha\neq\pm (2b\delta +\alpha c)
					$,  $ a_1^{s_0}a_2^{t_0}= \dfrac{ 2b}{\alpha(c+1)+2b\delta}$  and $a_1^{t_0}a_2^{s_0}=\dfrac{2b}{\alpha(c-1)+2b\delta}$
					such that
					\begin{equation*}
					f(x_1,y_1)=\alpha   \dfrac{a_1^{x_1}a_2^{y_1}+ a_1^{y_1}a_2^{x_1}}{2\delta} +(\alpha c+2b\delta)\dfrac{a_1^{x_1}a_2^{y_1}- a_1^{y_1}a_2^{x_1}}{2\delta} \;  and
					\end{equation*}
						\begin{equation*}
						  g(x_1,y_1)=   \dfrac{a_1^{x_1}a_2^{y_1}+ a_1^{y_1}a_2^{x_1}}{2\delta} +c  \dfrac{a_1^{x_1}a_2^{y_1}- a_1^{y_1}a_2^{x_1}}{2\delta}.
					\end{equation*}
					(5) There exist  constants $\alpha,  c, a_1, b_1 \in\mathbb{C}^{*} $   with
				 	$ {a_1}^{s_0+t_0}=b_1(s_0-t_0)=\dfrac{-1}{c}$ such that
					\begin{equation*}
					f(x_1,y_1)= \alpha{a_1}^{x_1+y_1}[ 1 +c b_1(x_1-y_1)]  \;\; and \;\; g(x_1,y_1)= b_1a_1^{x_1+y_1}(x_1-y_1).
					\end{equation*}
					(6) There exist  constants $ \alpha,\delta,a_1,b_1 \in \mathbb{C}^{*},  c \in \mathbb{C} \setminus \{-\delta/\alpha \}  $ with
					${a_1}^{s_0+t_0}=\dfrac{1}{\alpha c+\delta }$ and $ b_1(s_0-t_0)=-\dfrac{\alpha}{\alpha c+\delta}$ such that
					\begin{equation*}
					f(x_1,y_1)=   \dfrac{1}{\delta}{a_1}^{x_1+y_1} [ \alpha+ b_1(\alpha c+\delta) (x_1-y_1)]  \;\;and
					\end{equation*}
						\begin{equation*}
					 g(x_1,y_1)=   \dfrac{1}{\delta} {a_1}^{x_1+y_1}[  1+ b_1c (x_1-y_1)].
					\end{equation*}

		\end{ex}

\textbf{Conflict of interest.} On behalf of all authors, the corresponding author states that there is no conflict of interest.

\end{document}